\colorlet{darkishRed}{red!80!black}
\colorlet{darkishBlue}{blue!60!black}
\colorlet{darkishGreen}{green!60!black}
\newenvironment{customthm}[1]
  {\innercustomthm}
  {\endinnercustomthm}
\newtheorem{theorem}{Theorem}[section]
\newtheorem{mainresult}{Theorem}
\newtheorem{proposition}[theorem]{Proposition}
\newtheorem{cor}[theorem]{Corollary}
\newtheorem{lemma}[theorem]{Lemma}
\theoremstyle{definition}
\colorlet{darkishGreen}{green!60!black}
\newcommand{\cO}{\mathcal{O}}
\newcommand{\cB}{\mathcal{B}}
\newcommand{\cX}{\mathcal{X}}
\newcommand{\cU}{\mathcal{U}}
\newcommand{\cP}{\mathcal{P}}
\newcommand{\dmtop}{\textsc{DTop}}
\newcommand{\N}{\mathbb{N}}
\newcommand{\nao}[1][]{%
\ifthenelse{\equal{#1}{}}{\trianglelefteq_T}{\trianglelefteq_{T_{#1}}\!}%
}
\newcommand{\dS}{\mathstrut\mkern2.5mu S\mkern-13mu\raise1.4ex%
\hbox{$\leftrightarrow$}}
\def\lowfwd #1#2#3{{\mathop{\kern0pt #1}\limits^{\kern#2pt\raise.#3ex
\vbox to 0pt{\hbox{$\scriptscriptstyle\rightarrow$}\vss}}}}
\def\lowbkwd #1#2#3{{\mathop{\kern0pt #1}\limits^{\kern#2pt\raise.#3ex
\vbox to 0pt{\hbox{$\scriptscriptstyle\leftarrow$}\vss}}}}
\def\ve{\kern-1.5pt\lowfwd e{1.5}2\kern-1pt}
\def\ev{\kern-1pt\lowbkwd e{0.5}2\kern-1pt}
\def\vf{\kern-2pt\lowfwd f{2.5}2\kern-1pt}
\newenvironment{myindentpar}[1]%
{\begin{list}{}%
          {\setlength{\leftmargin}{#1}}%
          \item[]%
}
{\end{list}}
\begin{document}

\title[Ends of digraphs]{Ends of digraphs II: the topological point of view} 

\author{Carl B\"{u}rger}
\author{Ruben Melcher}
\address{University of Hamburg, Department of Mathematics, Bundesstraße 55 (Geomatikum), 20146 Hamburg, Germany}
\email{carl.buerger@uni-hamburg.de, ruben.melcher@uni-hamburg.de}

\keywords{infinite digraph; end; limit edge; topology; compact; solid; inverse limit; Eulerian; strongly connected}%
\subjclass[2010]{05C20, 05C45, 05C63}

\maketitle

\begin{abstract} In a series of three papers we develop an end space theory for digraphs. Here in the second paper we introduce the topological space $|D|$ formed by a digraph $D$ together with its ends and limit edges. We then characterise those digraphs that are compactified by this space. Furthermore, we show that if $|D|$ is compact, it is the inverse limit of finite contraction minors of $D$.  

To illustrate the use of this we extend  to the space $|D|$ two statements about finite digraphs that do not generalise verbatim to infinite digraphs.  The first statement is the characterisation of finite Eulerian digraphs by the condition that the in-degree of every vertex equals its out-degree. The second statement is the characterisation of strongly connected finite digraphs by the existence of a closed Hamilton walk.
\end{abstract}

\section{Introduction} 
\noindent Ends of graphs are one of the most important concepts for the study of infinite graphs.
In a series of three papers we develop an end space theory for digraphs.
See~\cite{EndsOfDigraphsI} for a comprehensive introduction to the entire series of three papers  (\cite{EndsOfDigraphsI}, \cite{EndsOfDigraphsIII} and this paper) and a brief overview of all our results.

In 2004, Diestel and Kühn~\cite{CyclesI} introduced a topological framework for infinite graphs which makes it possible to extend theorems about finite graphs to infinite graphs that do not generalise verbatim. The main point is to consider not only  the graph itself but the graph together with its ends, and to equip both together with a suitable topology. For locally finite graphs $G$, this space $|G|$ coincides with the Freudenthal compactification of $G$~\cite{DiestelBook5, diestel2011locallyI}.

Diestel and Kühn's approach has become standard and has lead to several results found by various authors. Examples include Nash-William's tree-packing theorem~\cite{DiestelBook3}, Fleischner's Hamiltonicity theorem \cite{AgelosFleischner}, and Whitney's planarity criterion~\cite{duality}. In the formulation of these theorems, topological arcs and circles take the role of paths and cycles, respectively.

To illustrate this, consider Euler's theorem that a connected finite graph contains an Euler tour if and only if every vertex has even degree. This statement fails for infinite graphs, since a closed walk in a connected infinite graph cannot traverse all its infinitely many edges. Diestel and K\"uhn~\cite{CyclesI} extended Euler's Theorem  to the space $|G|$ for locally finite graphs $G$, as follows.   
A  \emph{topological Euler tour} of $ \vert G \vert $ is a continuous map $\sigma \colon S^1 \to | G | $ such that every inner point of an edge of $G$ is the image of exactly one point of~$S^1$. Hence a topological Euler tour `traverses' every edge exactly once. 
Diestel and K\"uhn showed that for a connected locally finite graph $G$ the space $|G|$ admits a topological Euler tour if and only if every finite cut in $G$ is even. Note that in a finite graph every vertex has even degree if and only if all finite cuts are even, but even for locally finite infinite graphs the latter statement is stronger. Theorem~\ref{thm: eulerian characterisation} below is a directed analogue of the Diestel-Kühn theorem about topological Euler tours.

In the first paper of this series~\cite{EndsOfDigraphsI} we introduced the concept of \emph{ends} and \emph{limit edges} of a digraph. 
A \emph{directed ray} is an infinite directed path that has a first vertex (but no last vertex). The directed subrays of a directed ray are its \emph{tails}. For the sake of readability we shall omit the word `directed' in  `directed path' and `directed ray' if there is no danger of confusion. We call a ray in a digraph \emph{solid} in $D$ if it has a tail in some strong component of $D-X$ for every finite vertex set $X\subseteq V(D)$. We call two solid rays in a digraph $D$ \emph{equivalent} if for every finite vertex set $X\subseteq V(D)$ they have a tail in the same strong component of $D-X$. The classes of this equivalence relation are the \emph{ends} of $D$. 
The ends of a digraph can be thought of as points at infinity to which its solid rays converge.

For limit edges the situation is similar. Informally, they are additional edges that naturally arise between distinct ends of a digraph, as follows. Unlike graphs, digraphs may have two  rays $R$ and $R'$ that represent distinct ends and yet there may be a set $E$ of infinitely many independent edges from $R$ to~$R'$. In this situation there will be a limit edge from the end represented by $R$ to the end represented by~$R'$, and the edges in $E$ can be thought of as converging towards this limit edge. We will recall the precise definition of limit edges in Section~\ref{section: tools and terminology}.

We begin this paper by introducing a topology, which we call \dmtop, on the space $|D|$ formed by the digraph $D$ together with its ends and limit edges. In this topology rays and edges will converge to ends and limit edges, respectively. 
As our first main result we characterise those digraphs for which $|D|$ with \dmtop\ compactifies $D$. 

For graphs $G$, a necessary condition for $|G|$ to be compact is that $G-X$ has only finitely many components for every finite vertex set $X\subseteq V(G)$: if $G-X$ has infinitely many components, then these components together with all ends living in them  will form a disjoint family of open sets, and combining this family with a suitable cover of the finite graph $G[X]$ yields an open cover of $|G|$ that has no finite subcover. In \cite{VTopComp}, Diestel proves that this necessary condition is also sufficient. In analogy to this, let us call a digraph $D$ \emph{solid} if $D-X$ has only finitely many strong components for every finite vertex set $X\subseteq V(D)$. 

Our first main result is that Diestel's characterisation carries over to digraphs:

\begin{customthm}{\ref{thm: compact iff solid}} 
The space $\vert D \vert$ is compact if and only if $D$ is solid.
\end{customthm}
\noindent We remark that for every digraph $D$ the space $|D|$ is Hausdorff and that $D$ is dense in $|D|$. Hence if $D$ is solid, the space $|D|$ is a Hausdorff compactification of $D$.

A common way to generalise statements about finite graphs to infinite graphs is to use so-called compactness arguments. These can be phrased in terms of inverse limits. For example, the ray given by K\H{o}nig's infinity lemma~\cite[Lemma~8.1.2]{DiestelBook5} exists because the inverse limit of compact discrete spaces is non-empty. In order to make this technique applicable for the space $|D|$ we provide the following:

\begin{customthm}{\ref{thm: |D| homeo inverse limit]}}
For a solid digraph $D$ the space $|D|$ is the inverse limit of finite contraction minors of $D$.
\end{customthm}
\noindent  For the precise statement of this theorem see Section~\ref{section: inverse limit}. 

Recall that our motivation for introducing a  topology on a digraph $D$, together with its ends and limit edges, was to extend to the space $|D|$ theorems about finite digraphs that would be either false, or trivial, or undefined for $D$ itself. 
As a proof of concept, we prove two such applications for $|D|$.

For our first application recall that a finite digraph with  a  connected  underlying graph contains an Euler tour if and only if the in-degree equals the out-degree at every vertex~\cite{bang2008digraphs}. This statement fails for infinite digraphs, since a closed walk can only traverse finitely many edges.

As in the case of undirected graphs, however, there is a natural topological notion of Euler tours of $|D|$. Call a continuous map $\alpha \colon [0,1] \to |D|$ that respects the direction of the edges of $|D|$  a  \emph{topological path in $|D|$}, which is \emph{closed} if $\alpha(0)=\alpha(1)$. Call a closed topological path  an \emph{Euler tour} if it traverses every edge exactly once, and call $|D|$ \emph{Eulerian} if it admits an Euler tour. See Section~\ref{section: Applications} for precise definitions.

If $|D|$ is Eulerian, then the in-degree equals the out-degree at every vertex. The converse of this fails in general. For example,  the digraph $D$ on $\mathbb{Z}$ with edges $n(n+1)$ for every $n \in \mathbb{Z}$ has in- and out-degrees $1$ at every vertex, but $|D|$ has no Euler tour.

A \emph{cut} of a digraph $D$ is an ordered pair $(V_1,V_2)$ of non-empty sets $V_1,V_2 \subseteq V(D)$ such that $V_1 \cup V_ 2 = V(D)$ and $V_1 \cap V_2 = \emptyset$. The sets $V_1$ and $V_2$ are the \emph{sides} of the cut, and its \emph{size} is the cardinality of the set of edges from $V_1$ to $V_2$. We call a cut $(V_1,V_2)$ \emph{balanced} if its size equals that of $(V_2,V_1)$. Note that in a finite digraph the in-degree at every vertex equals the out-degree if and only if all finite cuts are balanced, but our $\mathbb{Z}$-example shows that for infinite digraphs, even locally finite ones, the latter statement is stronger.

Any unbalanced finite cut in a digraph $D$ is an obstruction that prevents $|D|$ from being Eulerian: by the directed jumping arc lemma (Lemma~\ref{lemma: jump lemma}), any Euler tour enters a side of a finite cut as often as it leaves it.  A second obstruction is a vertex of infinite in- or out-degree, as an Euler tour that traverses a vertex infinitely often forces the tour to converge to that vertex (see the proof of Theorem~\ref{thm: eulerian characterisation} for details). 

As our first application we show that there are no further obstructions. A digraph is \emph{locally finite} if all of its vertices have finite in- and out-degree.  

\begin{customthm}{\ref{thm: eulerian characterisation}}
For a digraph $D$ with a connected underlying graph the following assertions are~equivalent:
\begin{enumerate}
    \item $|D|$ is Eulerian;
    \item $D$ is locally finite and every finite cut of $D$ is balanced.
\end{enumerate}
\end{customthm}

In our second application we characterise the digraphs that are strongly connected. It is easy to see that a finite digraph is strongly connected if and only if it contains a closed directed walk that contains all its vertices. We obtain the following characterisation of strongly connected infinite digraphs:

\begin{customthm}{\ref{thm: char strong digraphs}}
For a countable solid digraph $D$ the following assertions  are equivalent:
\begin{enumerate}
    \item $D$ is strongly connected;
    \item there is a closed topological path in $|D|$ that contains all the vertices of $D$.
\end{enumerate}
\end{customthm}

\noindent We remark that the requirements `countable' and `solid' for $D$ are necessary. Indeed, any closed topological path in $|D|$ that traverses uncountably many vertices also traverses uncountably many edges of $D$. This gives rise to uncountably many disjoint open intervals in $[0,1]$, which is impossible. 
Furthermore, recall that the image of a compact space under a continuous map is compact. In particular the image of every topological path that contains all the vertices of $D$ is compact in~$|D|$. Hence the closure of $V(D)$ is compact, which implies that $D$ is solid. Indeed, if $D-X$ has infinitely many strong components for some finite vertex set $X\subseteq V(D)$, then these strong components together with all the ends that live in them will form a disjoint family of open sets, and combining this family with a suitable cover of $X$ yields an open cover of the closure of $V(D)$ that has no finite subcover.

This paper is organised as follows. In Section~\ref{section: tools and terminology} we collect together the results that we need from the first paper of this series \cite{EndsOfDigraphsI}, or from general topology.  In Section~\ref{section: a topology for digraphs} we formally define the topological space $|D|$ for a given digraph $D$ and prove Theorem~\ref{thm: compact iff solid}. In Section~\ref{section: inverse limit} we define an inverse system for a given digraph~$D$ and show that the inverse limit of this system coincides with $|D|$ if $D$ is solid (Theorem~\ref{thm: |D| homeo inverse limit]}).  Finally, in Section~\ref{section: Applications} we prove our two applications of our framework, Theorem~\ref{thm: eulerian characterisation} and Theorem~\ref{thm: char strong digraphs}.

\section{Tools and terminology}\label{section: tools and terminology} 
\noindent In this section we provide the tools and terminology that we use throughout this paper. Any graph-theoretic notation not explained here can be found in Diestel's textbook \cite{DiestelBook5}. For the sake of readability, we sometimes omit curly brackets of singletons, i.e., we write $x$ instead of $\{x\}$ for a set~$x$.
Furthermore, we omit the word `directed'---for example in `directed path'---if there is no danger of confusion. 

First, we list some tools and terminology that we adopt from the first paper~\cite{EndsOfDigraphsI} of our series. If not stated otherwise we consider digraphs without parallel edges and without loops.  For a digraph $D$ we write $V(D)$ for the vertex set of $D$, we write $E(D)$ for the edge set of $D$ and $\cX=\cX(D)$ for the set of finite vertex sets of $D$. 
We write edges as order pairs $(v,w)$ of vertices $v,w\in V(D)$, and usually we write $(v,w)$ simply as $vw$; except if $D$ is a multi-digraph in which case we write edges of $D$ as triples $(e,v,w)$. The vertex $v$ is the \emph{tail} of $vw$ and the vertex $w$ its~\emph{head}.

Given sets $A, B\subseteq V(D)$ of vertices a \emph{path from $A$ to $B$}, or \emph{$A$--$B$} path is a path that meets $A$ precisely in its first vertex and $B$ precisely in its last vertex. We say that a vertex $v$ can \emph{reach} a vertex $w$ in $D$ and $w$ can be \emph{reached} from $v$ in $D$ if there is a $v$--$w$ paths in $D$. A set $W$ of vertices is \emph{strongly connected} in $D$ if every vertex of $W$ can reach every other vertex of $W$ in $D[W]$.  A vertex set $Y \subseteq V(D)$ \emph{separates} $A$ and $B$ in $D$ with $A,B\subseteq V(D)$ if every $A$--$B$ path meets $Y$, or if every $B$--$A$ path meets $Y$.

The \emph{reverse} of an edge $vw$ is the edge $wv$. More generally, the \emph{reverse} of a digraph $D$ is the digraph on $V(D)$ where we replace every edge of $D$ by its reverse, i.e., the reverse of $D$ has the edge set $\{ \, vw \mid wv \in E(D)\, \}$.
A \emph{symmetric ray} is a digraph obtained from an undirected ray by replacing each of its edges by  its two orientations as separate directed edges. Hence the reverse of a symmetric ray is a symmetric ray.

A \emph{directed ray} is an infinite directed path that has a first vertex (but no last vertex). 
The directed subrays of a ray are its \emph{tails}. Call a ray \emph{solid} in $D$ if it has a tail in some strong component of $D-X$ for every finite vertex set $X\subseteq V(D)$.
Two solid rays in $D$ are  \emph{equivalent}, if they have a tail in the same strong component of $D-X$ for every finite vertex set $X \subseteq V(D)$. We call the equivalence classes of this relation the \emph{ends} of $D$ and we write $\Omega(D)$ for the set of ends of $D$.

Similarly, the reverse subrays of a reverse ray are its \emph{tails}. We call a reverse ray \emph{solid} in $D$ if it has a tail in some strong component of $D-X$ for every finite vertex set $X\subseteq V(D)$. With a slight abuse of notation, we say that a reverse ray $R$ \emph{represents} an end $\omega$ if there is a solid ray $R'$ in $D$ that represents $\omega$ such that $R$ and $R'$ have a tail in the same strong component of $D-X$ for every finite vertex set $X\subseteq V(D)$.

For a finite set $X \subseteq V(D)$ and a strong component $C$ of $D-X$ we say that an end $\omega$ \emph{lives in} $C$ if one (equivalent every)  ray  that represents $\omega$ has a tail in $C$. We write $C(X,\omega)$ for the strong component of $D-X$ in which $\omega$ lives. For two ends $\omega$ and $\eta$ of $D$, we say that a finite vertex set $X \subseteq V(D)$ \emph{separates} $\omega$ and $\eta$ if $C(X, \omega) \neq C(X, \eta )$, i.e., if $\omega$ and $\eta$ live in distinct strong components  of $D-X$.

For vertex sets $A, B\subseteq V(D)$ let $E(A,B)$ be the set of edges from $A$ to $B$, i.e., $E(A,B)= (A\times B)\cap E(D)$. Given a subdigraph $H\subseteq D$, a \emph{bundle} of $H$ is a non-empty edge set of the form $E(C,C')$, $E(v,C)$, or $E(C,v)$ for strong components $C$ and $C'$ of $H$ and a vertex $v\in V(D) \setminus V(H)$.
We say that $E(C,C')$ is a bundle \emph{between strong components}, and $E(v,C)$ and $E(C,v)$ are bundles \emph{between a vertex and a strong component.} In this paper we consider only bundles of subdigraphs $H$ with $H=D-X$ for some $X\in \cX(D)$.

Now, consider a vertex $v\in V(D)$, two ends $\omega, \eta\in \Omega(D)$ and a finite vertex set $X\subseteq V(D)$. If $X$ separates $\omega$ and $\eta$ we write $E(X,\omega\eta)$ as short for $E(C(X,\omega),C(X,\eta))$. Similarly, if $v \in C'$ for a strong component $C' \neq C(X,\omega)$ of $D-X$ we write $E(X,v\omega)$ and $E(X,\omega v)$ as short for the edge set $E(C', C(X,\omega))$ and $E(C(X,\omega),C')$, respectively. If $v \in X$  we write $E(X,v\omega)$ and $E(X,\omega v)$ as short for $E(v, C(X,\omega))$ and $E(C(X,\omega),v)$, respectively.  Note that $E(X,\omega\eta)$, $E(X,v\omega)$ and $E(X,\omega v)$ each define a unique bundle if they are non-empty.

A direction of a digraph $D$ is a map $f$ with domain $\cX(D)$ that sends every $X \in \cX(D)$ to a strong component or a bundle of $D-X$ so that $f(X)\supseteq f(Y)$ whenever $X\subseteq Y$.\footnote{Here, as later in this context, we do not distinguish rigorously between a strong component and its set of edges. Thus if $Y$ separates $\omega$ and $\eta$ but $X\subseteq Y$ does not, the expression $f_{\omega\eta}(X)\supseteq f_{\omega\eta}(Y)$ means that the strong component $f_{\omega \eta}(X)$ of $D-X$ contains all the edges from the edge set $f_{\omega\eta}(Y)$.} We call a direction $f$ on $D$ a \emph{vertex-direction} if $f(X)$ is a strong component of $D-X$ for every $X \in \cX(D)$, and we call it an \emph{edge-direction} otherwise, i.e., if $f(X)$ is a bundle of $D-X$ for some $X \in \cX(D)$. Every end $\omega$ of  a digraph $D$ defines a direction $f_\omega$ on $D$ in that it maps $X \in \cX(D)$ to $C(X,\omega)$.
The ends of~$D$ correspond bijectively to its vertex-directions:

\begin{theorem}[{\cite[Theorem~2]{EndsOfDigraphsI}}]\label{thm: ends correspond bijectively to vertex directions}
Let $D$ be any infinite digraph. The map $\omega \mapsto f_\omega$ with domain $\Omega(D)$ is a bijection between the ends and the vertex-directions of $D$.
\end{theorem}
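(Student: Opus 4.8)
The plan is to establish the three standard ingredients of a bijection: that $\omega\mapsto f_\omega$ is well defined with values among the vertex-directions, that it is injective, and that it is surjective. The first two are short. For well-definedness, the component $C(X,\omega)$ is independent of the chosen representative ray by the very definition of the equivalence relation on solid rays, so $f_\omega$ is a well-defined map on $\cX(D)$ whose values are strong components; it remains to check monotonicity. If $X\subseteq Y$ and $R$ represents $\omega$, then a tail of $R$ lies in $C(Y,\omega)$, and since $D-Y\subseteq D-X$ this same tail lies in the strong component $C(X,\omega)$ of $D-X$; hence $C(Y,\omega)\subseteq C(X,\omega)$, which is exactly $f_\omega(Y)\subseteq f_\omega(X)$. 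Thus $f_\omega$ is a vertex-direction. For injectivity, suppose $\omega\neq\eta$. Then no ray representing $\omega$ is equivalent to one representing $\eta$, so some finite $X$ gives $C(X,\omega)\neq C(X,\eta)$, whence $f_\omega\neq f_\eta$.

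The substance is surjectivity: given a vertex-direction $f$ I must produce an end $\omega$ with $f_\omega=f$. I would first record that every value $f(X)$ is infinite. Indeed, were $f(X)$ finite, then for $Y:=X\cup f(X)$ monotonicity would force $f(Y)\subseteq f(X)$, while $f(Y)$ is a strong component of $D-Y$ and hence disjoint from $Y\supseteq f(X)$; so $f(Y)=\emptyset$, contradicting that strong components are non-empty. Next I reduce the whole task to one clean statement: it suffices to construct a ray $R$ that has a tail in $f(X)$ for every $X\in\cX(D)$. Such an $R$ is automatically solid, since for each finite $X$ it then has a tail in the strong component $f(X)$ of $D-X$; and writing $\omega:=[R]$, for every finite $X$ both $C(X,\omega)$ and $f(X)$ are strong components of $D-X$ containing a common tail of $R$, so they coincide and $f_\omega=f$.

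To build such a ray I would work along a chain $X_0\subseteq X_1\subseteq\cdots$ of finite sets and put $C_n:=f(X_n)$, so that $C_0\supseteq C_1\supseteq\cdots$ is a decreasing sequence of infinite, strongly connected sets. Choosing $u_n\in C_n$ and, since $C_n$ is strongly connected and contains $u_n,u_{n+1}$, a $u_n$--$u_{n+1}$ path $Q_n$ inside $D[C_n]$, I concatenate the $Q_n$ into one infinite walk $W$. As $Q_m\subseteq C_m\subseteq C_n$ for all $m\geq n$, the walk $W$ is eventually contained in each $C_n$. When the chain exhausts $V(D)$, every vertex enters some $X_n$ and hence lies in only finitely many $C_n$, so it is traversed by $W$ only finitely often; a routine pruning (always jumping to the successor of the last occurrence of the current vertex) then turns $W$ into an honest ray $R$ whose tail still lies in each $C_n$. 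Finally, for an arbitrary finite $X$ pick $n$ with $X\subseteq X_n$; then $f(X)\supseteq f(X_n)=C_n$ by monotonicity, so the tail of $R$ in $C_n$ lies in $f(X)$. This gives the required property for all finite $X$, completing the countable case.

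The main obstacle is precisely the passage from realization along the chain to realization on \emph{every} finite vertex set. In the countable case this gap vanishes, because a chain exhausting $V(D)$ puts every finite $X$ below some $X_n$ and simultaneously supplies the ``each vertex occurs finitely often'' property used in the pruning. For digraphs of arbitrary cardinality neither is available: $\cX(D)$ has no countable cofinal subset, yet the single, necessarily countable, ray we construct must nonetheless have a tail in $f(X)$ for all finite $X$ at once, and these conditions cannot be reduced to countably many. I would attack this by first passing to a countable subset of $V(D)$ on which $f$ is faithfully represented, reducing to a setting where a countable exhaustion again suffices, and by building the ray and the chain simultaneously so that the extension paths (living in infinite, strongly connected components) can avoid the finitely many previously used vertices. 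Making this bookkeeping converge to a ray that meets $f(X)$ for every finite $X$ is the step I expect to be the most delicate.
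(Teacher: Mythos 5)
Your well-definedness and injectivity arguments are correct, and your surjectivity argument is sound and essentially complete for \emph{countable} digraphs: the nested components $C_n=f(X_n)$ along a chain exhausting $V(D)$, the concatenated walk through paths $Q_n\subseteq D[C_n]$, the pruning of that walk to a ray (each vertex occurs only finitely often because it eventually enters some $X_n$), and the final observation that every finite $X$ lies below some $X_n$ are all fine. The problem is that the theorem is asserted for \emph{any} infinite digraph, and for uncountable $D$ you explicitly leave the construction unfinished. That is a genuine gap, and it is the hard part of the theorem. (Note also that the present paper does not prove this statement at all; it imports it from the first paper of the series, whose $\cU$-rank and necklace machinery---proved by transfinite induction---exists largely to cope with exactly this cardinality issue.)

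The gap cannot be closed by better bookkeeping along the lines you sketch, for the following structural reason. If two vertex-directions $f$ and $g$ agree at some $Y\in\cX(D)$, then they agree at every $X\subseteq Y$: both $f(X)$ and $g(X)$ are strong components of $D-X$ containing the non-empty set $f(Y)=g(Y)$, hence they coincide. So agreement propagates \emph{downward} but not upward, and consequently $f_\omega=f$ holds if and only if $f_\omega$ and $f$ agree on a \emph{cofinal} subset of $(\cX(D),\subseteq)$. When $V(D)$ is uncountable, every cofinal subset of $\cX(D)$ is uncountable (each vertex must be covered by some member), whereas your ray certifies agreement only on the countably many sets $X_n$; no adaptive choice of the chain can change this count. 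Your fallback---passing to a countable subdigraph $D^*$ on which $f$ is ``faithfully represented''---runs into the same wall in another guise: for a finite set $X\not\subseteq V(D^*)$, the strong components of $D^*-(X\cap V(D^*))$ do not determine those of $D-X$, so a tail lying in $f(X\cap V(D^*))$ need not lie in $f(X)$. A correct proof needs an idea that operates at all cardinalities at once; for instance, one can recurse into a single value of the direction, using that $Y\mapsto f(X\cup Y)$ (for finite $Y\subseteq f(X)$) is a vertex-direction of the smaller digraph $D[f(X)]$ and that an end of $D[f(X)]$ defining this restriction also defines $f$ in $D$, with the recursion grounded by the rank/necklace apparatus of the first paper rather than by a countable exhaustion.
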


For two distinct ends $\omega$ and  $\eta$ of a digraph $D$, we call the pair  $(\omega,\eta)$ a \emph{limit edge} \emph{from} $\omega$ \emph{to} $\eta$, if $D$ has an edge from $C(X,\omega)$ to $C(X,\eta)$ for every finite vertex set $X\subseteq V(D)$ that separates $\omega$ and $\eta$. 

For a vertex $v\in V(D)$ and an end $\omega\in \Omega(D)$ call the pair $(v,\omega)$ a \emph{limit edge} \emph{from} $v$ \emph{to} $\omega$ if $D$ has an edge from $v$ to $C(X,\omega)$ for every finite vertex set $X\subseteq V(D)$ with $v \not\in  C(X, \omega)$. Similarly, we call the pair $(\omega,v)$ a \emph{limit edge} \emph{from} $\omega$ \emph{to} $v$ if $D$ has an edge from $C(X,\omega)$ to $v$ for every finite vertex set $X\subseteq V(D)$ with $v \notin C(X, \omega)$. We  denote by $\Lambda(D)$ the set of all the limit edges of $D$ and we use the usual definitions for edges accordingly; for example we will speak of the head and the tail of a limit edge.  Every limit edge $\lambda$ defines an edge-direction as follows. We say that a limit edge $\lambda=\omega\eta$ \emph{lives} in the bundle defined by $E(X, \lambda)$ if $X \in \cX(D)$ separates $\omega$ and~$\eta$.  If $X \in \cX(D)$ does not separate $\omega$ and $\eta$, we say that $\lambda=\omega\eta$ \emph{lives} in the strong component $C(X, \omega)= C(X, \eta)$ of $D-X$.
We use similar notations for limit edges of the form $\lambda= v \omega$ or $\lambda=  \omega v$ with $v\in V(D)$ and $\omega\in \Omega(D)$: We say that the limit edge $\lambda$  \emph{lives in} the bundle $E(X,\lambda )$ if $x \not\in C(X,\omega)$ and we say that $\lambda$  \emph{lives in} the strong component $C(X, \omega)$ of $D-X$, if $v \in C(X, \omega)$. The edge-direction $f_\lambda$ defined by $\lambda$ is the edge-direction that maps every finite vertex set $X\in \cX(D)$ to the bundle or strong component of $D-X$ in which $\lambda$ lives. The limit edges of any digraph correspond bijectively to its edge-directions:

\begin{theorem}[{\cite[Theorem~3]{EndsOfDigraphsI}}]\label{thm: limit edges correspond bijectively to edge directions}
Let $D$ be any infinite digraph. The map $\lambda \mapsto f_{\lambda}$ with domain $\Lambda(D)$ is a bijection between the  limit edges and the  edge-directions of~$D$.
\end{theorem}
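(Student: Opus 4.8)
The plan is to show that the endpoints of a limit edge can be \emph{read off} from its associated edge-direction, and to convert the end-valued data back into genuine ends by means of Theorem~\ref{thm: ends correspond bijectively to vertex directions}. The central construction, used for both injectivity and surjectivity, is an endpoint-extraction operator. Given any edge-direction $f$, first observe that the set of $X\in\cX(D)$ for which $f(X)$ is a bundle is up-closed and cofinal in $(\cX(D),\subseteq)$: it is non-empty since $f$ is an edge-direction, and once $f(X_0)$ is a bundle the consistency condition $f(X_0)\supseteq f(X)$ (read through the convention of the footnote) forces $f(X)$ to remain a bundle for all $X\supseteq X_0$. For such $X$ write $f(X)=E(A_X,B_X)$ with $A_X$ the tail side and $B_X$ the head side (each a strong component of $D-X$ or a vertex of $X$), and when $f(X)$ is a strong component $C$ put $A_X:=B_X:=C$. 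I would then check that the \emph{tail map} $X\mapsto A_X$ and the \emph{head map} $X\mapsto B_X$ are each directions whose values are always strong components or vertices, never bundles; the verification is a short case analysis on $X\subseteq Y$ using that a strongly connected subset of $D-Y$ lies in a single strong component of $D-X$. Such a direction is therefore either a vertex-direction, which by Theorem~\ref{thm: ends correspond bijectively to vertex directions} names an end, or is eventually equal to a fixed vertex $v$ (once a value is a vertex $v$, consistency pins it to $v$ for all larger $X$, since $v$ is deleted in $D-Y$). This assigns to $f$ a tail endpoint $t(f)$ and a head endpoint $h(f)$, each an end or a vertex.

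For injectivity I would simply note that $t(f_\lambda)$ and $h(f_\lambda)$ recover exactly the tail and head of $\lambda$: where $f_\lambda$ is a bundle we have $A_X=C(X,\omega)$ (respectively the tail vertex) for the tail endpoint, and the cofinal family $\{C(X,\omega)\}$ determines $f_\omega$ everywhere by consistency, hence $\omega$ by Theorem~\ref{thm: ends correspond bijectively to vertex directions}; the vertex case is immediate. Thus $f\mapsto(t(f),h(f))$ is a left inverse of $\lambda\mapsto f_\lambda$, so the latter is injective.

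For surjectivity, start from an arbitrary edge-direction $f$ and set $\lambda:=(t(f),h(f))$. Since $f(X)$ is a bundle for all $X$ in a cofinal up-closed set and every bundle runs between \emph{distinct} sides, we get $A_X\neq B_X$ there, so $t(f)$ and $h(f)$ are distinct; in particular $\lambda$ is not a degenerate pair of equal ends, and is a legitimate candidate for a limit edge. I would then confirm the edge-existence condition defining a limit edge: for every $X$ separating the endpoints, take $X':=X\cup X_0$ where $f(X_0)$ is a bundle; then $f(X')$ is a non-empty bundle, yielding an edge $e$ from $C(X',t(f))$ to $C(X',h(f))$, and since these components are contained in $C(X,t(f))$ and $C(X,h(f))$ respectively, $e$ also witnesses the required edge for $X$. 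Hence $\lambda\in\Lambda(D)$. Finally I would verify $f_\lambda=f$ valuewise: $X$ separates the endpoints of $\lambda$ precisely when $A_X\neq B_X$, i.e.\ exactly when $f(X)$ is a bundle, and in that case $f_\lambda(X)=E(X,\lambda)=E(A_X,B_X)=f(X)$; when $f(X)$ is a strong component $C$ we have $A_X=B_X=C$, so both endpoints live in $C$ and $f_\lambda(X)=C=f(X)$.

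The main obstacle I anticipate is the surjectivity direction, and within it the bookkeeping around the mixed component/bundle values: proving that the tail and head maps really are consistent directions (hence fall under the dichotomy end-or-vertex) requires handling the footnote convention carefully, above all ruling out a bundle value at $X$ sitting below a strong-component value at some $Y\supseteq X$, and tracking the vertex-versus-component alternative for the sides of a bundle. The passage from ``edge exists for all large $X$'' to ``edge exists for \emph{every} separating $X$'' via the $X\cup X_0$ trick is the other point that needs care, but it is routine once the extraction operator is in place.
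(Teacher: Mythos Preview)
The present paper does not contain a proof of this statement: it is quoted in Section~\ref{section: tools and terminology} as Theorem~3 of the first paper of the series~\cite{EndsOfDigraphsI} and used as a black box thereafter. There is therefore nothing in this paper to compare your proposal against.

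On its own merits your argument is essentially correct and is the natural one; it is very likely close to what~\cite{EndsOfDigraphsI} does. Two points deserve a little more care. First, your tail and head maps are not literally directions in the paper's sense, since a value $A_X=v$ with $v\in X$ is neither a strong component of $D-X$ nor a bundle; what you are really proving is the dichotomy ``$A_X$ is a strong component for every $X$'' (and then Theorem~\ref{thm: ends correspond bijectively to vertex directions} applies) versus ``$A_{X_0}=v$ for some $X_0$'' (and then $A_X=v$ on the cofinal set $\{X\supseteq X_0\}$, pinning down the vertex), and you should phrase it that way rather than calling the map a direction. Second, you should explicitly rule out the possibility that both $t(f)$ and $h(f)$ are vertices: by the definition of a bundle of $D-X$ at least one of its two sides is a strong component of $D-X$, so for any $X$ containing both putative endpoint vertices the value $f(X)$ could not be a bundle, contradicting the cofinality you established. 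This is exactly why limit edges are defined only between two ends or between a vertex and an end, and it is needed before you can declare $\lambda=(t(f),h(f))\in\Lambda(D)$. With these two clarifications in place, your injectivity and surjectivity arguments go through as written.
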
 

Let $H$ be any fixed digraph. A \emph{subdivision} of $H$ is any digraph that is obtained from $H$ by replacing every edge $vw$ of $H$ by a path $P_{vw}$ with first vertex $v$ and last vertex $w$ so that the paths $P_{vw}$ are  internally disjoint and do not meet~$V(H)\setminus \{v,w\}$. We call the paths $P_{vw}$ \emph{subdividing} paths. If $D$ is a subdivision of $H$, then the original vertices of $H$ are the \emph{branch vertices} of $D$ and the new vertices its \emph{subdividing vertices}. 

\emph{An inflated} $H$ is any digraph that arises from a subdivision $H'$ of $H$ as follows. Replace every branch vertex $v$ of $H'$ by a strongly connected digraph $H_v$ so that the $H_v$ are disjoint and do not meet any subdividing vertex; here replacing means that we first delete $v$ from $H'$ and then add $V(H_v)$ to the vertex set and $E(H_v)$ to the edge set. Then replace every subdividing path $P_{vw}$ that starts in $v$ and ends in $w$ by an $H_v$--$H_w$ path that coincides with $P_{vw}$ on inner vertices. We call the vertex sets $V(H_v)$ the \emph{branch sets} of the inflated $H$. A \emph{necklace} is an inflated symmetric ray with finite branch sets; the branch sets of a necklace are its \emph{beads}.  Note that if a digraph $D$ contains a necklace $N$, then every ray  in $N$ is solid in $D$ and any two such rays are equivalent. Hence all rays in $N$ represent a unique end of $D$. With a slight abuse of notation, we say that a necklace $N\subseteq D$ \emph{represents} an end $\omega$ of $D$ if one (equivalently every) ray in $N$ represents~$\omega$.  For limit edges we have the following:

\begin{proposition}[{\cite[Proposition~5.1]{EndsOfDigraphsI}}]\label{lemma: char limit edge type I}
For a digraph $D$ and two distinct ends $\omega$ and $\eta$ of $D$ the following assertions are equivalent:
\begin{enumerate}
    \item $D$ has a limit edge from $\omega$ to $\eta$;
    \item there are necklaces $N_\omega \subseteq D $ and $N_\eta \subseteq D$ that represent $\omega$ and $\eta$ respectively such that every bead of $N_\omega$ sends an edge to a bead of $N_\eta$. 
\end{enumerate}
Moreover, the necklaces may be chosen disjoint from each other and such that the $n$th bead of $N_\omega$ sends an edge to the $n$th bead of $N_\eta$.
\end{proposition}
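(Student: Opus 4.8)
My plan is to prove the two implications separately and to extract the \emph{moreover} part from the construction used for (i)$\Rightarrow$(ii). I would begin with the easy direction (ii)$\Rightarrow$(i). Assume we are given necklaces $N_\omega$ and $N_\eta$ with the stated edges between their beads, and let $X$ be any finite vertex set separating $\omega$ and $\eta$. Since the beads of a necklace are finite and a whole tail of a necklace avoiding $X$ is strongly connected in $D-X$ (consecutive beads are mutually reachable), all but finitely many beads of $N_\omega$ lie in $C(X,\omega)$ and all but finitely many beads of $N_\eta$ lie in $C(X,\eta)$. It therefore suffices to produce a bead of $N_\omega$ lying in $C(X,\omega)$ that sends an edge to a bead of $N_\eta$ lying in $C(X,\eta)$; this edge then runs from $C(X,\omega)$ to $C(X,\eta)$, and as $X$ is arbitrary it witnesses the desired limit edge. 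I would obtain such a pair from the matching of beads: taking a pair of matched beads of sufficiently large index places both simultaneously in the correct strong components.

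For the substantial direction (i)$\Rightarrow$(ii) I would first realise $\omega$ and $\eta$ by necklaces. Starting from solid rays representing $\omega$ and $\eta$, I would inflate each ray to a necklace by inserting, between successive checkpoints on the ray, a return path inside the relevant strong component $C(X,\omega)$ or $C(X,\eta)$; such a return path exists because these components are strongly connected, and bundling a forward segment with its return path yields a finite, strongly connected bead, with consecutive beads mutually reachable. As $\omega\neq\eta$, some finite set separates them, so after discarding a finite initial portion the two necklaces have tails in different strong components and may be taken disjoint.

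The heart of the proof is a recursion refining these necklaces so that their $n$-th beads are joined by an edge. I would maintain nested finite separators $X_1\subseteq X_2\subseteq\cdots$, where $X_n$ collects all vertices used up to stage $n$. Since $X_n$ separates $\omega$ and $\eta$, the limit-edge hypothesis supplies an edge $u_nw_n$ from $C(X_n,\omega)$ to $C(X_n,\eta)$. Using that $C(X_n,\omega)$ is strongly connected, I would route a finite path from the current last bead of $N_\omega$ out to $u_n$ and back, absorbing $u_n$ into a new finite, strongly connected bead; symmetrically I would absorb $w_n$ into a new bead of $N_\eta$ inside $C(X_n,\eta)$. The edge $u_nw_n$ now joins these two beads, which I take to be the $n$-th beads of the refined necklaces. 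I would choose all new vertices outside the finite set already used and outside the opposite necklace, so that the two necklaces stay disjoint, and I would push each checkpoint into $C(X_n,\cdot)$, so that the resulting rays remain solid and equivalent to $\omega$ respectively $\eta$. This produces disjoint necklaces whose $n$-th beads are matched by an edge, giving both (ii) and the \emph{moreover} part.

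The main obstacle I expect is the simultaneous bookkeeping inside this recursion: at each stage I must keep the absorbed beads finite and strongly connected, preserve disjointness of the two necklaces, avoid disturbing the necklace structure already built, and still guarantee that the matched indices tend to infinity (this last point is exactly what the converse direction consumes). The limit-edge hypothesis is the resource that keeps the recursion alive, as it furnishes an edge $u_nw_n$ reaching between the two deep strong components no matter how large the already-used set $X_n$ has grown; organising the separators so that this edge can always be absorbed into fresh finite beads without breaking what precedes it is the delicate part.
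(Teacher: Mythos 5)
First, a framing remark: this paper does not actually prove the proposition you were asked about --- it is imported verbatim, with a citation, from the first paper of the series --- so your attempt can only be measured against the statement itself. Your direction (i)$\Rightarrow$(ii) is the right kind of recursion: grow finite separators $X_1\subseteq X_2\subseteq\cdots$ that swallow every vertex used so far, harvest an edge $u_nw_n$ from $C(X_n,\omega)$ to $C(X_n,\eta)$ from the limit-edge hypothesis, and build the $n$th beads around $u_n$ and $w_n$. This is the natural construction and it is what delivers the ``moreover'' clause. One deferred detail does need real work: a connecting path from the current last bead out to $u_n$ lives in $C(X_{n-1},\omega)$ and may a priori meet vertices added at stage $n-1$ itself, so ``choose all new vertices outside the finite set already used'' is not automatic; the standard fix is to absorb such intersections into the bead rather than to avoid them.

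The genuine gap is in your ``easy'' direction (ii)$\Rightarrow$(i), at the step ``taking a pair of matched beads of sufficiently large index places both simultaneously in the correct strong components''. Assertion (ii) gives no control whatsoever over the index of the bead of $N_\eta$ that a given bead of $N_\omega$ sends its edge to: the assignment of beads of $N_\omega$ to beads of $N_\eta$ need be neither injective nor unbounded, and in particular every bead of $N_\omega$ may send its edge to one and the same bead of $N_\eta$. This is not a presentational slip that more care can repair, because from (ii) read literally the implication is false. Take two disjoint symmetric rays on vertex sets $a_1,a_2,\ldots$ and $b_1,b_2,\ldots$ (all beads singletons) and add the edges $a_ib_1$ for every $i$. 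Then every bead of the first necklace sends an edge to a bead of the second, yet $X=\{b_1\}$ separates the two ends $\omega$ and $\eta$, and no edge runs from $C(X,\omega)=\{a_i : i\ge 1\}$ to $C(X,\eta)=\{b_i : i\ge 2\}$; there is no limit edge from $\omega$ to $\eta$ (only one from $\omega$ to the vertex $b_1$). What the implication actually needs is exactly the index-matched strengthening recorded in the ``moreover'' part --- the $n$th bead of $N_\omega$ sends an edge to the $n$th bead of $N_\eta$ --- under which your ``choose $n$ large'' argument becomes correct, and which is precisely what your own recursion in (i)$\Rightarrow$(ii) produces. You half-notice this yourself when you remark that the matched indices must tend to infinity and that this is ``exactly what the converse direction consumes''; but your proof of (ii)$\Rightarrow$(i) never secures that property, and from bare (ii) it cannot be secured. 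The equivalence should therefore be argued with (ii) taken in its index-matched form (or with edges between beads of unbounded index on both sides), and your write-up should say so explicitly.
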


\begin{proposition}[{\cite[Proposition~5.2]{EndsOfDigraphsI}}]\label{lemma: char limit edge type II}
For a digraph $D$, a vertex $v$ and an end $\omega$ of $D$ the following assertions are equivalent:
\begin{enumerate}
    \item{$D$ has a limit edge from $v$ to $\omega$ (from $\omega$ to $v$);}
    \item{there is a necklace $N \subseteq D$ that represents $\omega$ such that $v$ sends (receives) an edge to (from) every bead of $N$.}
\end{enumerate}
\end{proposition}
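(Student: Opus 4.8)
The plan is to prove both equivalences in parallel, carrying out the ``$v$ sends an edge to every bead'' version in detail and obtaining the ``$v$ receives an edge from every bead'' version at the end by passing to the reverse digraph. Throughout I fix a solid ray $R=r_0r_1r_2\cdots$ representing $\omega$, which exists because $\omega$ is a nonempty equivalence class of solid rays. For the implication (ii)$\Rightarrow$(i), suppose $N$ is a necklace representing $\omega$ each of whose beads receives an edge from $v$, and let $X\in\cX(D)$ with $v\notin C(X,\omega)$. Since $X$ is finite it meets only finitely many beads and connecting paths of $N$, so the union $U$ of all beads and connecting paths of $N$ from some index on avoids $X$; as consecutive beads are joined in both directions and each bead is strongly connected, $U$ is strongly connected and hence lies in a single strong component of $D-X$, which must be $C(X,\omega)$ because $U$ contains a tail of a ray of $N$. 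In particular some bead $B$ lies in $C(X,\omega)$, and the edge from $v$ to $B$ is an edge from $v$ to $C(X,\omega)$. As $X$ was arbitrary, $(v,\omega)$ is a limit edge.

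For (i)$\Rightarrow$(ii) I build the necklace by recursion, steering it towards $\omega$ along $R$. I maintain an increasing sequence $X_0\subseteq X_1\subseteq\cdots$ in $\cX(D)$ with $v\in X_0$ and $\{r_0,\dots,r_n\}\subseteq X_n$, and I write $C_n=C(X_n,\omega)$, so the $C_n$ form a decreasing sequence of strong components all avoiding $v$. At stage $n$ the limit-edge hypothesis applied to $X_n$ yields an edge $(v,w_n)$ with $w_n\in C_n$; since $R$ represents $\omega$ it has a tail in $C_n$, so I may also pick a vertex $p_n\in C_n$ on $R$, whose index exceeds $n$ by the choice of $X_n$. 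Using that $C_n$ is strongly connected I take the bead $B_n\subseteq C_n$ to be a finite strongly connected set containing both $w_n$ and $p_n$ (for instance the union of a $w_n$--$p_n$ and a $p_n$--$w_n$ path in $C_n$), and for $n\ge 1$ I join $B_{n-1}$ to $B_n$ by two paths, one in each direction, inside the strongly connected set $C_{n-1}\supseteq B_{n-1}\cup B_n$. Finally I enlarge $X_{n+1}$ so as to absorb all beads and all internal vertices of the connecting paths built so far, together with $r_{n+1}$. Disjointness is then largely automatic, since anything built at stage $n$ lies in $C_{n-1}$ and thus avoids the earlier beads and path-interiors that have been placed into $X_{n-1}$; the one point requiring the usual care is the internal disjointness of the two oppositely directed paths of a single link, which is arranged by routing them through disjoint parts of $C_{n-1}$.

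The resulting object $N$ is a necklace whose bead $B_n$ receives the edge $(v,w_n)$ from $v$, so it remains only to verify that $N$ represents $\omega$ and not some other end; this is the main obstacle. By construction $N$ represents some end $\omega'$, and for every $X\in\cX(D)$ the tail of $N$ past a suitable index is a strongly connected subset of $D-X$ and therefore lies in $C(X,\omega')$. This tail contains $p_n$ for all large $n$; on the other hand the tail of $R$ in $C(X,\omega)$ also contains $p_n$ for all large $n$, because the indices of the $p_n$ on $R$ tend to infinity. Hence $C(X,\omega')$ and $C(X,\omega)$ share a vertex and thus coincide, for every $X$, and the bijection between ends and vertex-directions (Theorem~\ref{thm: ends correspond bijectively to vertex directions}) gives $\omega'=\omega$.

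I expect this representation step to be the genuine difficulty: building beads inside shrinking components $C_n$ is easy, but the nested components need not ``converge'' to $\omega$, and it is exactly the device of forcing each bead to capture a vertex $p_n$ of the fixed ray $R$ that pins down the correct end. Everything else is either the short compactness-style argument of (ii)$\Rightarrow$(i) or routine bookkeeping to keep the construction disjoint. The ``receives/from'' statement finally follows by applying the case just proved to the reverse of $D$, using that reversal preserves the strong components of $D-X$ as vertex sets (so that $\omega$ corresponds to an end of the reverse with the same components $C(X,\omega)$), turns necklaces into necklaces, and interchanges limit edges from $v$ to $\omega$ with limit edges from $\omega$ to $v$.
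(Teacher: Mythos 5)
This proposition is only quoted by the present paper from the first paper of the series, so there is no in-paper proof to compare against; your argument has to stand on its own. Judged that way, much of it is right: your proof of (ii)$\Rightarrow$(i) is correct, the reduction of the ``receives'' case to the ``sends'' case via the reverse digraph is sound (reversal preserves the strong components of $D-X$, hence the vertex-directions and, by Theorem~\ref{thm: ends correspond bijectively to vertex directions}, the ends), and in (i)$\Rightarrow$(ii) you have correctly identified the genuinely end-specific difficulty and solved it: forcing each bead to capture a vertex $p_n$ of a fixed solid ray $R$ representing $\omega$ does pin the constructed necklace to $\omega$ rather than to some other end living in all the $C_n$.

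The gap is in the necklace construction itself, precisely in the disjointness you dismiss as routine. First, for a \emph{fixed} choice of consecutive beads it can be impossible to join them by two internally disjoint, oppositely directed connecting paths: if $C_{n-1}$ contains vertices $b,m,b'$ whose only edges are $bm$, $mb$, $mb'$, $b'm$, and the recursion has produced $B_{n-1}=\{b\}$ and $B_n=\{b'\}$ (which it can, e.g.\ when $w_n=p_n$), then every path from $B_{n-1}$ to $B_n$ and every path back has $m$ as an internal vertex, so ``routing them through disjoint parts of $C_{n-1}$'' is impossible; the only repair is to enlarge or merge beads (absorb $m$ into one of them), a move your construction never makes. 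Second, there is an off-by-one error in your disjointness claim: the links between $B_{n-1}$ and $B_n$ built at stage $n$ are only known to lie in $C_{n-1}=C(X_{n-1},\omega)$, which avoids $X_{n-1}$; but the interiors of the links between $B_{n-2}$ and $B_{n-1}$ were built at stage $n-1$ and added only to $X_n$, not to $X_{n-1}$, and since those links end in $B_{n-1}\subseteq C_{n-1}$ their interiors may well contain vertices of $C_{n-1}$, which the new links may then run through. Neither problem is cosmetic, because strong connectivity of $C_{n-1}$ never lets you route a path around prescribed vertices (they may be cut vertices); closing these gaps requires the bead-merging technique that is the actual substance of the necklace machinery in the first paper. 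Alternatively one can sidestep the hand construction: show that $\omega$ lies in the closure of $U=N^+(v)$ (replace $X$ by $X\cup\{v\}$ and use the limit-edge property), invoke the first paper's refined necklace lemma giving a necklace attached to $U$ that represents the prescribed end $\omega$, and then merge consecutive beads so that every bead meets $U$ and hence receives an edge from $v$.
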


For a digraph $D$ and a set $\cU$ we say that a necklace $N\subseteq D$ is \emph{attached to} $\cU$ if infinitely many beads of $N$ meet every set of $\cU$. In the first paper of this series we introduced an ordinal rank function that can be used to find out whether a digraph $D$ contains for a given set $\cU$ a necklace attached to $\cU$.
For this, consider a finite set~$\cU$ and think of $\cU$ as consisting of infinite sets of vertices. We define in a transfinite recursion the class of digraphs that have a \emph{$\cU$-rank}. A digraph $D$ has \emph{$\cU$-rank} $0$ if there is a set $U\in\cU$ such that $U\cap V(D)$ is finite. It has \emph{$\cU$-rank} $\alpha$ if it has no $\cU$-rank $<\alpha$ and there is some $X\in\cX(D)$ such that every strong component of $D-X$ has a $\cU$-rank $<\alpha$.  In the case  $U=V(D)$ we call the $U$-rank of $D$  the \emph{rank of $D$} (provided that $D$ has a $U$-rank). Note that more generally if $U \supseteq V(D)$ for a digraph, then its $U$-rank equals its rank.

\begin{lemma}[Necklace Lemma {\cite[Lemma~1]{EndsOfDigraphsI}}]\label{lemma: necklace lemma}
Let $D$ be any digraph and  $\cU$  a finite set of vertex sets of $D$. Then exactly one of the statements is true: \begin{enumerate}
    \item $D$ has a necklace attached to $\cU$;    
    \item $D$ has a $\cU$-rank. 
\end{enumerate}
\end{lemma}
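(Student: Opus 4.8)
I will prove both halves of the dichotomy separately: that the two statements cannot hold simultaneously, and that at least one of them always does. Throughout, call a digraph $E$ \emph{bad} if it has no $\cU$-rank. Unwinding the recursive definition of rank, $E$ is bad if and only if (a) $U\cap V(E)$ is infinite for every $U\in\cU$, and (b) for every $X\in\cX(E)$ some strong component of $E-X$ is bad. (If some $U\in\cU$ is finite then every digraph has $\cU$-rank $0$, statement (ii) holds and statement (i) cannot, so I may assume every $U\in\cU$ is infinite.)

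For the implication that the two statements are \emph{not} both true, suppose for contradiction that $D$ has both a $\cU$-rank and a necklace $N$ attached to $\cU$, and argue by transfinite induction on the $\cU$-rank $\alpha$ of $D$. If $\alpha=0$, then $U\cap V(D)$ is finite for some $U\in\cU$, contradicting the fact that the infinitely many pairwise disjoint beads of $N$ each meet $U$. If $\alpha>0$, fix $X\in\cX(D)$ witnessing the rank, so every strong component of $D-X$ has $\cU$-rank $<\alpha$. Since $X$ is finite it meets only finitely many beads of $N$ and only finitely many of the (finite) subdividing paths of $N$; hence some tail $N'$ of $N$ avoids $X$. As $N'$ is strongly connected it lies in a single strong component $C$ of $D-X$, and $N'$ is itself a necklace attached to $\cU$. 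Thus $C$ has $\cU$-rank $<\alpha$ and contains a necklace attached to $\cU$, contradicting the induction hypothesis.

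For the implication that at least one statement holds I prove the contrapositive: every bad digraph $D$ has a necklace attached to $\cU$. Using (b) with $X=\emptyset$ I first pass to a bad strong component of $D$, so I may assume $D$ is bad and strongly connected; by (a) every $U\in\cU$ then meets $V(D)$ in an infinite (in particular non-empty) set. I build the necklace one bead at a time, maintaining a nested chain $D=C_0\supseteq C_1\supseteq\cdots$ of bad strong components and beads $B_n\subseteq C_n\setminus C_{n+1}$. Given $C_n$ together with a finite strongly connected \emph{seed} in $C_n$ that meets every $U\in\cU$ (and through which the connecting paths from the previous stage enter; for $n=0$ I take any such seed, obtainable by joining one representative from each $U\cap V(C_0)$ into a closed walk), I use (b) to pick a bad strong component $C_{n+1}$ of $C_n$ minus the seed. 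Strong connectivity of $C_n$ then provides a forward path from the seed into $C_{n+1}$ and a backward path from $C_{n+1}$ to the seed, both with interiors contained in $C_n\setminus C_{n+1}$. Finally, inside $C_{n+1}$ (strongly connected, and meeting every $U$ infinitely by (a)) I choose the next seed as a finite strongly connected set containing the two connector endpoints and meeting every $U\in\cU$.

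The step I expect to be the main obstacle is disjointness of the connecting paths. Disjointness of each stage from all \emph{later} beads and connectors is automatic from the nesting: everything constructed at stage $n$ lies in $C_n\setminus C_{n+1}$, whereas everything constructed afterwards lies in $C_{n+1}$. The delicate point is \emph{internal} disjointness of the forward and backward connector of one and the same stage, which can genuinely fail at a bottleneck vertex. I resolve this by absorption: whenever the two connectors share an internal vertex $z$, the initial segment of the forward connector witnesses that the bead reaches $z$ and the final segment of the backward connector witnesses that $z$ reaches the bead, so $z$ together with these two segments may be added to the bead, which stays finite and strongly connected and disjoint from $C_{n+1}$ (absorbed vertices are connector-interior vertices, hence lie outside $C_{n+1}$). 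Since the connectors are finite this absorption terminates and leaves internally disjoint forward and backward paths joining the enlarged bead to $C_{n+1}$. The beads produced this way are finite, strongly connected, pairwise disjoint, each meets every $U\in\cU$, and consecutive beads are joined by internally disjoint forward and backward paths; hence they form an inflated symmetric ray with finite branch sets, that is, a necklace. As each of its infinitely many beads meets every $U\in\cU$, this necklace is attached to $\cU$, which completes the proof.
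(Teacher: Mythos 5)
This paper does not actually prove the Necklace Lemma: it is imported verbatim from the first paper of the series \cite{EndsOfDigraphsI}, so there is no in-paper proof to compare your attempt against; I can only assess it on its own terms. Your proof is correct and self-contained. The exclusivity half is the standard transfinite induction on the $\cU$-rank: rank $0$ is killed by the disjointness of the beads, and for $\alpha>0$ a tail of the necklace avoids the finite witnessing set $X$, lands (being strongly connected) in a single strong component of $D-X$ of smaller rank, and contradicts the induction hypothesis. The existence half via the nested chain $C_0\supseteq C_1\supseteq\cdots$ of rankless strong components, with finite strongly connected seeds hitting every $U\in\cU$, is also sound, and you correctly isolated the one genuinely delicate point: the forward and backward connectors of a single stage can be forced to share internal vertices (a cut vertex between the bead and $C_{n+1}$ is a concrete example), so internal disjointness cannot simply be assumed. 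Your absorption argument handles this properly --- each absorbed segment keeps the bead finite, strongly connected and disjoint from $C_{n+1}$, and the surviving connector segments are proper subpaths of the previous ones, so the process terminates with internally disjoint connectors. One harmless imprecision: ``everything constructed at stage $n$ lies in $C_n\setminus C_{n+1}$'' is not literally true of the connectors' far endpoints, which lie in $C_{n+1}$; but since those endpoints are exactly the vertices you place into the next seed, the necklace axioms (disjoint beads, connecting paths internally disjoint from each other and from all beads) still hold, and every bead meets every $U\in\cU$, so the necklace is attached to $\cU$ as required.
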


An \emph{arborescence} is a rooted oriented tree $T$ that contains for every vertex \mbox{$v \in V(T)$} a directed path from the root to $v$. A \emph{directed star} is an arborescence whose underlying tree is an undirected star that is centred in the root of the arborescence. A \emph{directed comb} is the union of a ray with infinitely many finite disjoint paths (possibly trivial) that have precisely their first vertex on $R$. Hence the underlying graph of a directed comb is an undirected comb. The \emph{teeth} of a directed comb or reverse directed comb are the teeth of the underlying comb. The ray from the definition of a comb is the \emph{spine} of the comb. Given a set $U$ of vertices in a digraph, a comb \emph{attached} to $U$ is a comb with all its teeth in $U$ and a star \emph{attached} to $U$ is a subdivided infinite star with all its leaves in $U$.
The set of teeth is the \emph{attachment set} of the comb and the set of leaves is the \emph{attachment set} of the star.
We adapt the notions of `attached to' and `attachment sets' to reverse combs or reverse stars, respectively. We need the following lemma from the first paper of this \mbox{series~\cite[Lemma~3.2]{EndsOfDigraphsI}.}

\begin{lemma}[Directed Star-Comb Lemma]
Let $D$ be any strongly connected digraph and let $U\subseteq V(D)$ be infinite. Then $D$ contains a star or comb attached to $U$ and a reverse star or reverse comb attached to $U$ sharing their attachment sets.
\end{lemma}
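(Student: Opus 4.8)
The plan is to reduce the statement to a one-sided version and then exploit the symmetry between $D$ and its reverse. Call a \emph{forward} structure a directed star or directed comb, and note that a reverse star or reverse comb in $D$ is precisely a forward structure in the reverse of $D$, which is again strongly connected. So it suffices to prove the following claim: \emph{in a strongly connected digraph $D$ and for every infinite $U\subseteq V(D)$ there is a forward star or comb attached to an infinite subset of $U$.} Granting the claim, I would first apply it to $D$ and $U$ to obtain a forward star or comb $F$ with an infinite attachment set $U_1\subseteq U$, then apply it to the reverse of $D$ and to $U_1$ to obtain a forward star or comb in the reverse digraph with an infinite attachment set $U_2\subseteq U_1$; reversing its edges turns this into a reverse star or comb $B$ in $D$ attached to $U_2$. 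Finally, since every vertex of $U_2\subseteq U_1$ is a tooth (resp.\ leaf) of $F$, deleting from $F$ the teeth (resp.\ branch paths) ending outside $U_2$ leaves a forward star or comb $F'$ with attachment set exactly $U_2$. Then $F'$ and $B$ share the attachment set $U_2$, as required.

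\textbf{Building a directed tree carrying $U$.} To prove the claim I would carry $U$ on an arborescence. Since $D$ is strongly connected, a fixed root $r$ reaches every vertex, so by a routine Zorn's-lemma argument (the union of a chain of arborescences rooted at $r$ is again such an arborescence, and a maximal one must span, since otherwise a directed path from $r$ to an omitted vertex would leave the current tree along an edge that could be added) the digraph $D$ has a spanning arborescence $T$ rooted at $r$; in particular $U\subseteq V(T)$. Let $T'$ be the union of the unique directed $r$--$u$ paths of $T$ over all $u\in U$. Then $T'$ is a sub-arborescence of $T$ in which every leaf lies in $U$, because a leaf of $T'$ is the last vertex of one of these paths; consequently every vertex of $T'$ has a descendant in $U$.

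\textbf{The dichotomy and the main obstacle.} Now I would run the directed analogue of the tree case of the undirected star--comb lemma inside $T'$. If some vertex $c$ has infinite out-degree in $T'$, then its out-neighbours lie in pairwise disjoint subtrees, each of which contains a vertex of $U$; following these downward yields infinitely many internally disjoint directed $c$--$U$ paths, that is, a forward star attached to an infinite subset of $U$. Otherwise $T'$ is locally finite and infinite, and I would select a directed ray $R=r,c_1,c_2,\dots$ by repeatedly passing to a child whose subtree still contains infinitely many vertices of $U$; this is possible by the pigeonhole principle because out-degrees are finite. Since every vertex of $T'$ has a $U$-descendant, infinitely many $c_i$ either lie in $U$ themselves or possess a child off the ray whose subtree reaches $U$ (if this failed from some index on, the tail of $R$ would be a bare path avoiding $U$, contradicting that its first vertex has infinitely many $U$-vertices below it); this produces infinitely many pairwise disjoint directed teeth from $R$ into $U$, i.e.\ a forward comb attached to an infinite subset of $U$. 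I expect the last point to be the main obstacle: one must verify both that directedness is preserved --- which is the reason I argue inside the arborescence rather than trying to reorient an undirected star or comb --- and that the teeth can be chosen infinitely often and pairwise disjoint, for which the minimality property that every leaf of $T'$ lies in $U$ is the crucial ingredient.
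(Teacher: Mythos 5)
Within this paper the Directed Star--Comb Lemma is only quoted as a tool, imported from the first paper of the series as \cite[Lemma~3.2]{EndsOfDigraphsI}; no proof of it appears here, so there is no in-paper argument to compare yours against, and I judge your proposal on its own. Your proof is correct. The Zorn's-lemma construction of a spanning arborescence rooted at $r$ is sound (unions of chains of arborescences rooted at $r$ are again such arborescences, and a maximal one is spanning, since any $r$--$v$ path to an omitted vertex leaves the tree along an addable edge), and passing to the union $T'$ of the $r$--$u$ paths, $u\in U$, gives the property that carries the whole argument: every vertex of $T'$ has a descendant in $U$. The dichotomy then works as you describe: a vertex of infinite out-degree yields a subdivided star whose branch paths run into pairwise disjoint subtrees, hence are internally disjoint with distinct leaves in $U$; in the finite out-degree case the pigeonhole ray exists, and your parenthetical argument is the right one --- if from some $c_N$ onwards no ray vertex lay in $U$ and no off-ray child subtree met $U$, then a path from $c_N$ to any $U$-descendant could neither stay on the ray forever (it is finite) nor branch off, so $T'_{c_N}$ would contain no $U$-vertices at all, contradicting the choice of the ray. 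Disjointness of the teeth follows since distinct nontrivial teeth live in off-ray subtrees hanging at distinct ray vertices. The reduction by symmetry is also fine: the reverse of a strongly connected digraph is strongly connected, and pruning a star or comb to an infinite subset of its leaves or teeth again yields a star or comb, so both structures end with the common attachment set $U_2\subseteq U$, which is exactly what ``sharing their attachment sets'' requires. This one-sided tree argument combined with reversal and pruning is the natural route and, as far as a comparison is possible, essentially the one taken in \cite{EndsOfDigraphsI}.
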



In the second part of this section, we list the tools and terminology about inverse limits that we need. Here we follow the textbook of Zalesskii and Ribes~\cite{RibesZalesskii}. 

Let $(I,\le)$ be a \emph{directed} partially ordered set, i.e., $I$ is partially ordered by $\le$ and for any two elements  $i,j\in I$ there exist an element $k \in I$ such that $ i,j \leq k$. A collection  $\{\, X_i \mid i\in I \, \}$ of topological spaces together with continuous maps  $f_{ji} \colon X_j \to X_i$, for all $i \leq j$, is called \emph{inverse system} if $f_{ki}=f_{ji}\circ f_{kj}$ whenever $i\le j\le k$ and $f_{ii}$ is the identity on $X_i$, for all $i \in I$. We denote such an inverse system by $\{X_i, f_{ij}, I \} $. The continuous maps $f_{ji} \colon X_j \to X_i$ are called \emph{bonding maps}. The \emph{inverse limit} $\varprojlim ( X_i )_{ i \in I }$ is the subspace of the product space $\prod_{i \in  I} X_i$ that consists of all the $(x_i)_{i\in I}$ with  $f_{ji}(x_j)=x_i$ for all $i\le j$. In this setup, we write $f_i$ for the projection from $\varprojlim ( X_i )_{ i \in I }$ to~$X_i$. If all the $X_i$ are Hausdorff, the inverse limit is closed in the product space. Therefore, by Tychonoff's theorem, if all the $X_i$ are in addition compact, then the inverse limit is compact. For a topological space $Y$ together with continuous maps $ \varphi_i \colon Y \to X_i$, for all $i \in I$, the collection of maps $\{ \, \varphi_i \mid i \in I\, \}$  is called \emph{compatible} if $\varphi_i= f_{ji}  \circ \varphi_j$ for all $i\le j$. The inverse limit of an inverse system is (up to unique homeomorphism) characterised by the following \emph{universal property}:

\vspace{0,2cm}
\begin{myindentpar}{\parindent}
   \emph{For every topological space $Y$ together with  compatible maps  $\varphi_i\colon Y\to X_i$, for $i \in I$, there is a unique continuous map $\Phi \colon Y \to \varprojlim ( X_i )_{ i \in I } $ with $\varphi_i = f_i \circ \Phi$ for all $i \in I$.} 
\end{myindentpar}
\vspace{0,2cm}

In this situation, we say that the map $\Phi$ is \emph{induced} by the maps $\varphi_i$. For a topological space $Y$ together compatible maps  $\varphi_i \colon Y \to X_i $, for $i \in I$, the collection of maps $\{ \, \varphi_i \mid i \in I\, \}$ is called \emph{eventually injective} if for every two distinct $y,y' \in Y$ there is some $i \in I$ with $\varphi_i (y  ) \neq \varphi_i (y')$, see \cite[Lemma~8.8.4]{DiestelBook5}. 

\begin{lemma}[Lifting Lemma]\label{lemma: eventually injectiv} 
Let $\{X_i, f_{ij}, I\}$ be any inverse system and let $Y$ be a topological space together with eventually injective compatible maps  
$  \varphi_i\colon Y\to X_i$, for $i \in I$. Then the unique continuous map $\Phi \colon Y \to \varprojlim ( X_i )_{ i \in I } $ given by the universal property of the inverse limit is injective.
\end{lemma}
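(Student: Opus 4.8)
The plan is to prove the Lifting Lemma directly from the definitions of the inverse limit and the induced map $\Phi$, together with the hypothesis of eventual injectivity. Recall that $\Phi$ is the unique continuous map supplied by the universal property, and it satisfies $\varphi_i = f_i \circ \Phi$ for every $i \in I$, where $f_i \colon \varprojlim(X_i)_{i\in I} \to X_i$ is the projection. So $\Phi$ sends a point $y \in Y$ to the tuple whose $i$-th coordinate is $\varphi_i(y)$; that is, $\Phi(y) = (\varphi_i(y))_{i \in I}$. (One should check this explicitly: the tuple $(\varphi_i(y))_{i\in I}$ does lie in the inverse limit, because compatibility of the $\varphi_i$ gives $f_{ji}(\varphi_j(y)) = \varphi_i(y)$ for all $i \le j$, which is exactly the condition defining membership in $\varprojlim(X_i)_{i\in I}$; and this assignment is continuous and satisfies $f_i \circ \Phi = \varphi_i$, so by uniqueness it must be the map $\Phi$ from the universal property.)

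With this concrete description of $\Phi$ in hand, injectivity is almost immediate. First I would take two distinct points $y, y' \in Y$. By the definition of eventual injectivity, there exists some index $i \in I$ with $\varphi_i(y) \neq \varphi_i(y')$. But the $i$-th coordinate of $\Phi(y)$ is $\varphi_i(y)$ and the $i$-th coordinate of $\Phi(y')$ is $\varphi_i(y')$, so the two tuples $\Phi(y)$ and $\Phi(y')$ differ in their $i$-th coordinate. Hence $\Phi(y) \neq \Phi(y')$, which shows that $\Phi$ is injective.

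The only genuine content of the argument, and hence the step I expect to require the most care, is the verification that $\Phi$ really is given coordinatewise by $y \mapsto (\varphi_i(y))_{i\in I}$. This is not stated as a black box in the excerpt, so rather than invoking it I would derive it: the tuple $(\varphi_i(y))_{i\in I}$ lands in the inverse limit by compatibility, the resulting map $Y \to \varprojlim(X_i)_{i\in I}$ is continuous (its composition with each projection $f_i$ is the continuous map $\varphi_i$, and continuity into a product, hence into a subspace of a product, is tested coordinatewise), and it satisfies the relation $f_i \circ \Phi = \varphi_i$ characterising $\Phi$ in the universal property. Uniqueness in the universal property then pins it down. Everything after that is the one-line coordinate comparison above, so the proof is short; its correctness rests entirely on unwinding the universal property into this explicit formula.
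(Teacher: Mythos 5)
Your proof is correct. Note that the paper itself gives no proof of this lemma---it is stated as a known fact with a citation to Diestel's textbook---so there is no in-paper argument to compare against. Your argument is the standard one: use the uniqueness clause of the universal property to identify $\Phi$ with the coordinatewise map $y \mapsto (\varphi_i(y))_{i \in I}$ (checking that compatibility puts this tuple inside $\varprojlim (X_i)_{i \in I}$, that the map is continuous because continuity into a product, hence into a subspace of a product, is tested coordinatewise, and that it satisfies $f_i \circ \Phi = \varphi_i$), and then eventual injectivity immediately separates the images of any two distinct points of $Y$ in some coordinate. You correctly identified the only step with real content, namely the explicit description of $\Phi$, and you justified it rather than assuming it.
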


We need the following two results  \cite[Lemma~1.1.7]{RibesZalesskii} and \cite[Lemma~1.1.9]{RibesZalesskii}:

\begin{lemma}\label{lemma: image dense in inverse limit} Let $\{ X_i, f_{ij}, I\}$ be an inverse system of topological spaces over a directed set $I$, and let $ \varphi_i \colon X \to X_i $ be compatible surjections from the space $X$ onto the spaces $X_i$ $(i \in I)$. Then either $\varprojlim (X_i)_{i \in I} = \emptyset$ or the induced mapping $\Phi \colon X \to \varprojlim (X_i)_{i \in I}$ maps $X$ onto a dense subset of $\varprojlim (X_i)_{i \in I}$.
\end{lemma}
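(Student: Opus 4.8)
The plan is to verify density directly against the standard basis for the subspace topology that $L:=\varprojlim(X_i)_{i\in I}$ inherits from the product $\prod_{i\in I}X_i$. Recall that the induced map sends $x$ to $\Phi(x)=(\varphi_i(x))_{i\in I}$, and that this tuple indeed lies in $L$ precisely because the $\varphi_i$ are compatible. If $L=\emptyset$ there is nothing to prove, so assume $L\neq\emptyset$; I then want to show that every nonempty open set $O\subseteq L$ meets $\Phi(X)$. Fix a point $p=(p_i)_{i\in I}\in O$. Since $O$ is open in the subspace topology, I may shrink it to a basic open box: there are finitely many indices $i_1,\dots,i_n$ and open sets $U_{i_j}\subseteq X_{i_j}$ with $p_{i_j}\in U_{i_j}$ such that, writing $V=\{(x_i)_{i\in I} : x_{i_j}\in U_{i_j}\text{ for } j=1,\dots,n\}$, we have $p\in V\cap L\subseteq O$.

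The key step is to collapse these finitely many coordinate constraints to a single coordinate by exploiting directedness of $I$. Choose $k\in I$ with $i_j\le k$ for all $j$, and set $W:=\bigcap_{j=1}^n f_{k i_j}^{-1}(U_{i_j})$. This set is open in $X_k$ by continuity of the bonding maps, and it contains $p_k$: since $p\in L$, the defining consistency equations give $f_{k i_j}(p_k)=p_{i_j}\in U_{i_j}$ for every $j$. In particular $W$ is nonempty.

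Now surjectivity enters. Because $\varphi_k\colon X\to X_k$ is onto and $W\neq\emptyset$, there is some $x\in X$ with $\varphi_k(x)\in W$. Compatibility then yields, for each $j$, that $\varphi_{i_j}(x)=f_{k i_j}(\varphi_k(x))\in U_{i_j}$, so $\Phi(x)\in V\cap L\subseteq O$. Hence $\Phi(X)$ meets $O$, and as $O$ was an arbitrary nonempty open subset of $L$, the image $\Phi(X)$ is dense. I expect the only subtle point to be the reduction from several coordinates to one: one must use the consistency relations $f_{k i_j}(p_k)=p_{i_j}$ that define membership in $L$ to guarantee that the pulled-back open set $W$ is nonempty \emph{before} invoking surjectivity of the single map $\varphi_k$. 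Everything else is a routine unwinding of the product topology together with the definitions of the compatible family $(\varphi_i)$ and the induced map $\Phi$.
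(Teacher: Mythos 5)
Your proof is correct. Note that the paper does not prove this lemma at all---it is quoted as Lemma~1.1.7 from the textbook of Ribes and Zalesskii---and your argument (collapse the finitely many coordinate constraints of a basic open box to a single index $k$ by directedness of $I$, pull the open sets back along the bonding maps $f_{k i_j}$ to get a nonempty open $W \ni p_k$ in $X_k$, then invoke surjectivity of $\varphi_k$ and compatibility) is exactly the standard proof of that cited result, so there is nothing to compare beyond confirming it is sound.
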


For a partially ordered set $I$ a subset $I' \subseteq I$ is called \emph{cofinal} if for all $i \in I$ there is an $i' \in I'$ with $i \leq i'$.

\begin{lemma}\label{lemma: cofinal limit homeo}
Let $\{X_i,f_{ij},I \}$ be an inverse system of compact topological spaces over a directed poset $I$ and assume that $I'$ is a cofinal subset of $I$. Then $$ \varprojlim (X_i)_{i \in I}  \cong \varprojlim (X_{i'})_{i' \in I' }.$$
\end{lemma}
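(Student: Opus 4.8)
The plan is to prove Lemma~\ref{lemma: cofinal limit homeo}, which states that for an inverse system $\{X_i, f_{ij}, I\}$ of compact spaces over a directed poset $I$ with a cofinal subset $I' \subseteq I$, we have $\varprojlim(X_i)_{i\in I} \cong \varprojlim(X_{i'})_{i'\in I'}$.

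First I would observe that the restriction of the original inverse system to the index set $I'$ is itself an inverse system: since $I'$ inherits the partial order from $I$ and $I'$ is cofinal, any two elements of $I'$ have an upper bound in $I$, which in turn lies below some element of $I'$ (again by cofinality), so $I'$ is itself directed. Let me write $L := \varprojlim(X_i)_{i\in I}$ and $L' := \varprojlim(X_{i'})_{i'\in I'}$. The natural candidate for the homeomorphism is the restriction-of-coordinates map $\pi\colon L \to L'$ sending $(x_i)_{i\in I}$ to $(x_{i'})_{i'\in I'}$; this is well defined and continuous because it is the restriction to $L$ of the canonical projection $\prod_{i\in I} X_i \to \prod_{i'\in I'} X_{i'}$, and the compatibility conditions for the subfamily are a subset of those for the whole family.

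The key steps are then to show $\pi$ is a bijection, and to upgrade this to a homeomorphism using compactness. For \emph{injectivity}: suppose $(x_i)$ and $(y_i)$ in $L$ agree on all coordinates indexed by $I'$. Given any $i \in I$, cofinality gives $i' \in I'$ with $i \le i'$, and then $x_i = f_{i'i}(x_{i'}) = f_{i'i}(y_{i'}) = y_i$ using the defining relations of the inverse limit together with $x_{i'} = y_{i'}$. Hence $(x_i) = (y_i)$. For \emph{surjectivity}: given $(x_{i'})_{i'\in I'} \in L'$, I would define a point of $L$ by setting, for each $i \in I$, $x_i := f_{i'i}(x_{i'})$ for some chosen $i' \in I'$ with $i \le i'$. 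The content here is to check this is independent of the chosen bound and satisfies all bonding relations $f_{ji}(x_j) = x_i$ for $i \le j$ in $I$; both follow by picking a common upper bound in $I'$ (using directedness of $I'$ and cofinality) and applying the functoriality relation $f_{ki} = f_{ji}\circ f_{kj}$. Finally, $\pi$ is a continuous bijection; since each $X_i$ is compact Hausdorff, both $L$ and $L'$ are compact Hausdorff (closed subspaces of compact product spaces, as recalled in the excerpt), and a continuous bijection from a compact space to a Hausdorff space is automatically a homeomorphism.

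The main obstacle I expect is the surjectivity step, specifically verifying that the point $(x_i)_{i\in I}$ constructed by pulling back along cofinal indices is genuinely \emph{well defined} and \emph{coherent}. Independence of the choice of bound $i'$ and verification of every bonding relation both rest on the same maneuver: given two relevant indices, choose an upper bound of them inside $I'$ and reconcile the two pullbacks through that common bound via the composition law of bonding maps. This is where directedness of $I$, cofinality of $I'$, and the identity $f_{ki} = f_{ji}\circ f_{kj}$ must be combined carefully, whereas the remaining parts (continuity, injectivity, and the compactness upgrade) are essentially formal. One subtlety worth flagging is the degenerate possibility that the limits are empty; but the argument above does not actually require nonemptiness, since the bijection claim holds vacuously in that case and the homeomorphism conclusion is immediate.
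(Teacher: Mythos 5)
Your proof is correct, but there is no in-paper argument to compare it against: the paper imports this lemma verbatim from the textbook of Ribes and Zalesskii (their Lemma~1.1.9) and never proves it. Measured against the standard textbook argument, your route — restriction-of-coordinates map, injectivity and surjectivity via cofinality plus the composition law $f_{ki}=f_{ji}\circ f_{kj}$, then the continuous-bijection-from-compact-to-Hausdorff upgrade — is essentially the classical one, with one caveat in the last step. That step needs $\varprojlim (X_i)_{i\in I}$ compact and $\varprojlim (X_{i'})_{i'\in I'}$ Hausdorff, yet the lemma as stated assumes only ``compact topological spaces''; you are silently adding Hausdorffness (indeed, the paper's preliminaries obtain closedness of the limit in the product, hence its compactness, only when the $X_i$ are Hausdorff, and a non-Hausdorff target would break the continuous-bijection-implies-homeomorphism step). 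In context this is harmless, since the lemma is only ever applied to the spaces $D/P$, which are compact Hausdorff, and the cited textbook's compactness convention includes Hausdorff. Still, you could sidestep the issue, and drop compactness altogether, by noting that the inverse of your map $\pi$ is continuous directly: its $i$-th coordinate is $f_{i'i}\circ \pi_{i'}$ for any $i'\in I'$ with $i\le i'$ (well defined by your surjectivity computation), and a map into the inverse limit, as a subspace of the product, is continuous precisely when all its coordinates are. That direct argument is what the textbook does; it yields the statement for arbitrary inverse systems of topological spaces, whereas your compactness shortcut, though slightly less general, is perfectly adequate for the use made of the lemma in this paper.
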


Finally, we need the following theorem~\cite[Theorem~3.1.13]{EngelkingBook} from basic topology:

\begin{lemma}\label{lemma: compact into haussdorff gives hemeo}
Every continuous injective mapping of a compact space onto a Hausdorff space is a homeomorphism.
\end{lemma}

\section{A topology for digraphs}\label{section: a topology for digraphs}
\noindent In this section we define a topology on the space $\vert D \vert$ formed by a digraph $D$ together with its ends and limit edges that we call \dmtop. 

In this topological space, topological arcs and circles take the role of paths and cycles, respectively. This makes it possible to extend to the space $\vert D \vert$ statements about finite digraphs.  As an important cornerstone we  characterise, in this section, those digraphs $D$ for which $\vert D \vert$ is compact, see Theorem~\ref{thm: compact iff solid}.

Consider a digraph $D=(V,E)$ with its set $\Omega=\Omega(D)$ of  ends and its set $\Lambda=\Lambda(D)$ of limit edges. The ground set $|D|$ of our topological space is defined as follows. Take $V \cup \Omega$ together with a copy $[0,1]_e$ of the unit interval for every edge  $e \in E \cup \Lambda$. Now, identify every vertex or end $x$ with the copy of $0$ in $[0,1]_e $ for which $x$ is the tail of $e$ and  with the copy of $1$ in $[0,1]_f$ for which $x$ is the head of~$f$, for all $e,f \in E \cup \Lambda$.

For inner points $z_e\in [0,1]_e$ and $z_f\in [0,1]_f$ of edges  $e,f \in E \cup \Lambda$ we say that $z_e$ \emph{corresponds} to $z_f$ if both correspond to the same point of the unit interval. For $e \in E \cup \Lambda$ the point set obtained from $[0,1]_e$ in $\vert D \vert$ is an \emph{edge of $\vert D \vert$}.  The vertex or end that was identified with the copy of $0$ is the \emph{tail} of the edge of $|D|$ and the vertex or end that was identified with the copy of $1$ its \emph{head}.

We define the topological space \dmtop\ on $|D|$ by specifying the basic open sets.
For a vertex $v$ we take the collection of uniform stars of radius $\varepsilon$ around $v$ as basic open neighbourhoods. For inner points $z$ of edges $ [0,1]_e$ with $e \in E$ we keep the open balls around $z$ of radius $\varepsilon$ as basic open sets (considered as subsets of $[0,1]_e$). Here we make the convention that for edges $e$ (possibly limit edges) the open balls $B_\varepsilon(z)$ of radius $\varepsilon$ around points $z \in e$ is implicitly chosen small enough to guarantee $B_\varepsilon(z)\subseteq e$.

Neighbourhoods $\hat{C}_{\varepsilon}(X,\omega)$ of an end $\omega$ are of the following form: Given $X\in \cX(D)$ let $\hat{C}_{\varepsilon}(X,\omega)$ be the union of (see Figure~\ref{fig:NbhdEnds})
\begin{itemize}
    \item the point set of $C(X,\omega)$,
    \item the set of all ends and points of limit edges that live in $C(X,\omega)$ and
    \item  half-open partial edges $(\varepsilon,y]_e $ respectively   $[ y,\varepsilon)_e$ for every edge $e \in E \cup \Lambda $ for which $y$ is contained or lives in $C(X,\omega)$.
\end{itemize}

\begin{figure}[h]
\centering
\def\svgwidth{0.59\columnwidth}
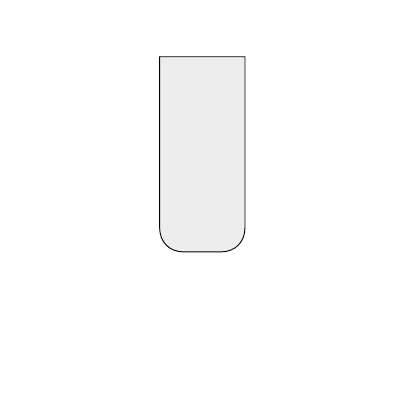
\caption{A basic open neighbourhood of the form $\hat{C}_{\varepsilon}(X,\omega)$.} 
\label{fig:NbhdEnds}
\end{figure} 

\noindent Neighbourhoods $\hat{E}_{\varepsilon,z}(X,\omega \eta)$ of an inner point $z$ of a limit edge $\omega\eta$ between ends are of the following form: Given $X\in \cX(D)$ that separates $\omega$ and $\eta$ let $\hat{E}_{\varepsilon,z}(X,\omega \eta)$ be the union of (see Figure~\ref{fig:NbhdEdges}) \begin{itemize}  
\item the open balls of radius $\varepsilon$ around points $z_e$ of edges $e\in E(X,\omega\eta)$ and with $z_e$ corresponding to $z$ and
\item the open balls of radius $\varepsilon$ around points $z_\lambda$ of limit edges $\lambda$ that live in the bundle $E(X,\omega\eta)$ and with $z_\lambda$ corresponding to $z$.
\end{itemize}

\newpage
\noindent Similarly, for an inner point $z$ of a limit edge $v\omega$ between a vertex $v$ and an  end $\omega$ we define the open neighbourhoods $\hat{E}_{\varepsilon,z}(X,v\omega)$ as follows. Given $X\in \cX(D)$ with $v \in X$ let $\hat{E}_{\varepsilon,z}(X,v \omega)$ be the union of  \begin{itemize}  
\item the open balls of radius $\varepsilon$ around points $z_e$ of edges $e\in E(X,v\omega)$ and with $z_e$ corresponding to  $z$ and
\item the open balls of radius $\varepsilon$ around points $z_\lambda$ of limit edges $\lambda$ that live in the bundle $E(X,v\omega)$ and with $z_\lambda$ corresponding to $z$.
\end{itemize}
Open sets $\hat{E}_{\varepsilon,z}(X,\omega v)$ for a limit edge $\omega v$ between an  end $\omega$ and a vertex $v$ are defined analogously.  
\begin{figure}[h]
\centering
\def\svgwidth{0.385\columnwidth}
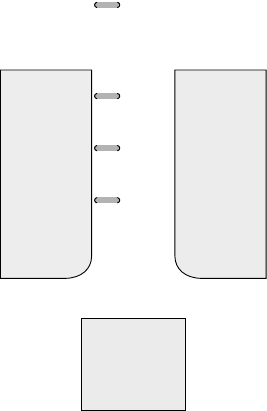
\caption{A basic open neighbourhood of the form $\hat{E}_{\varepsilon,z}(X,\omega \eta)$.} 
\label{fig:NbhdEdges}
\end{figure}

We remark that these basic open sets ensure that limit edges are homeomorphic to the unit interval and that the space $|D|$ is Hausdorff. We view a digraph $D$ as a subspace of $|D|$, namely the subspace that is formed by all the (equivalence classes of) vertices and inner points of edges of $D$. If there is no danger of confusion we will not distinguish between the digraph $D$ and the topological space $D$. Furthermore, we call the subspace $\Omega(D)$  of $\vert D \vert$ the \emph{end space of $D$}. The end space of an undirected graph $G$ coincides with the end space of the digraph obtained from $G$ by replacing every edge by its two orientations as separate directed edges.

One of the key definitions in the first paper of our series~\cite{EndsOfDigraphsI}, was that an end $\omega$ of $D$ is said to be in the closure of $\cU$, for a set of vertex sets $\cU$, if for all $X \in \cX(D)$ every $U \in \cU$ has a vertex in $C(X , \omega)$. Now that \dmtop\ is at hand this is tantamount to $\omega \in \overline{U}$ for every $U \in \cU$. We therefore obtain an extension of \cite[Lemma~4.1]{EndsOfDigraphsI}: 
\newpage

\begin{lemma} \label{lemma: connection closure; direction; necklace; EXTENDED top. closure} Let $D$ be any digraph, and let $\cU$ be a finite set of vertex sets of $D$. Then the following assertions are equivalent:
\begin{enumerate}
    \item $D$ has an end in the closure of $\cU$;
    \item $D$ has a vertex-direction in the closure of $\cU$;
    \item $D$ has a necklace attached to $\cU$;
    \item $D$ has an end in $ \bigcap \{\, \overline{U} \mid U \in \cU \,\} $.
\end{enumerate}
\end{lemma}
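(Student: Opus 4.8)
The plan is to keep the equivalence of the first three assertions and to graft (iv) onto it. I would adopt $\text{(i)}\Leftrightarrow\text{(ii)}\Leftrightarrow\text{(iii)}$ directly from~\cite[Lemma~4.1]{EndsOfDigraphsI}; the only genuinely new content is that, now that \dmtop\ is defined, the combinatorial notion of an end lying in the closure of $\cU$ coincides with the topological one. Accordingly I would reduce everything to the single-set statement that, for an end $\omega$ of $D$ and a vertex set $U\subseteq V(D)$,
\[ \omega\in\overline U \iff U \text{ meets } C(X,\omega) \text{ for every } X\in\cX(D), \]
where $\overline U$ is the closure of $U$ in $(|D|,\dmtop)$. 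Granting this, intersecting over $U\in\cU$ shows that $\omega\in\bigcap\{\overline U\mid U\in\cU\}$ holds exactly when every $U\in\cU$ has a vertex in $C(X,\omega)$ for every $X\in\cX(D)$; the latter is verbatim the definition of $\omega$ being in the closure of $\cU$. Hence an end realises (iv) if and only if it realises (i), which closes the chain.

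To prove the single-set statement I would use that the sets $\hat C_\varepsilon(X,\omega)$, over $X\in\cX(D)$ and $\varepsilon>0$, form a neighbourhood basis of $\omega$, so that $\omega\in\overline U$ is equivalent to: every $\hat C_\varepsilon(X,\omega)$ meets $U$. For the direction from right to left, if $U$ meets $C(X,\omega)$ for each $X$ then each $\hat C_\varepsilon(X,\omega)$ meets $U$, because $\hat C_\varepsilon(X,\omega)$ contains the entire point set of $C(X,\omega)$; thus $\omega\in\overline U$. For the converse I would fix $X\in\cX(D)$ and a value of $\varepsilon$, use $\omega\in\overline U$ to obtain a point of $U$ in $\hat C_\varepsilon(X,\omega)$, note that this point is a vertex of $D$, and then argue that it must lie in $C(X,\omega)$.

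The one step that needs care---and the only place I expect any friction---is this last localisation of the vertex. Here I would simply unravel the definition of $\hat C_\varepsilon(X,\omega)$ into its three constituents: the point set of $C(X,\omega)$; the ends and inner points of limit edges living in $C(X,\omega)$; and the half-open partial edges $(\varepsilon,y]_e$ or $[y,\varepsilon)_e$. The second constituent contains no vertex of $D$, and the third contains only inner points of edges, since $\varepsilon>0$ cuts off precisely the endpoint of $e$ lying outside $C(X,\omega)$ while the endpoint $y$ (when it is a vertex) already belongs to the first constituent. Consequently the vertices of $D$ in $\hat C_\varepsilon(X,\omega)$ are exactly the vertices of $C(X,\omega)$, so the vertex of $U$ produced above lies in $C(X,\omega)$. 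As $X$ was arbitrary this gives the forward direction, and with it the lemma.
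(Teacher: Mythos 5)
Your proposal is correct and takes essentially the same route as the paper: the paper also imports the equivalence of (i)--(iii) from \cite[Lemma~4.1]{EndsOfDigraphsI} and obtains (iv) by observing that the combinatorial definition of an end lying in the closure of $\cU$ is tantamount to $\omega\in\overline{U}$ for every $U\in\cU$. The paper leaves that last equivalence as an unproved remark, so your verification of it---via the neighbourhood basis $\hat{C}_{\varepsilon}(X,\omega)$ and the check that the only vertices of $D$ in $\hat{C}_{\varepsilon}(X,\omega)$ are those of $C(X,\omega)$---merely supplies details the paper omits.
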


Recall that we call a digraph $D$ \emph{solid} if $D-X$ has finitely many strong components for every $X \in \cX(D)$. The main result of this section reads as follows:

\begin{mainresult}\label{thm: compact iff solid} The space $|D|$ is compact if and only if $D$ is solid. 
\end{mainresult}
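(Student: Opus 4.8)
My plan is to prove the two implications separately, the direction ``$D$ solid $\Rightarrow |D|$ compact'' being the substantial one.

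For the easy direction I would argue contrapositively. Suppose $D$ is not solid and fix $X\in\cX(D)$ for which $D-X$ has infinitely many strong components $C_1,C_2,\dots$. For each $i$ let $O_i$ be the union of the point set of $C_i$, of all ends and points of limit edges that live in $C_i$, and of the half-open collars at the $C_i$-endpoints of all edges and limit edges. Each $O_i$ is a union of basic open sets (of the forms $\hat{C}_\varepsilon(X,\cdot)$, vertex-stars and edge-balls), hence open, and the $O_i$ are pairwise disjoint since every end and every limit edge lives in exactly one strong component of $D-X$. Together with a vertex-star of radius $<1$ around each of the finitely many $x\in X$ and a basic open neighbourhood of each inner point of $|D|$ not yet covered (these are middle segments of edges and limit edges that run between components or meet $X$), the sets $O_i$ form an open cover of $|D|$. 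Choosing a vertex $v_i\in C_i$ for each $i$, the only member of this cover containing $v_i$ is $O_i$: a vertex lies in no edge-ball and in none of the vertex-stars, as $v_i\notin X$ and those stars have radius $<1$. Hence every subcover must contain all the $O_i$ and cannot be finite, so $|D|$ is not compact.

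For the converse let $D$ be solid and suppose, for contradiction, that some open cover $\mathcal{O}$ of $|D|$ admits no finite subcover; passing to a refinement I may assume every member of $\mathcal{O}$ is basic. For $X\in\cX(D)$ and a strong component $C$ of $D-X$ write $\hat{C}(X,C)$ for the open region consisting of $C$, of the ends and limit edges living in $C$, and of the collars at the $C$-endpoints of edges (with $\hat{C}(\emptyset,D):=|D|$), and call such a region, or a bundle region of $D-X$, \emph{bad} if no finite subfamily of $\mathcal{O}$ covers it. Since a finite strong component carries neither an end nor a limit edge, a bad region has $C$ infinite. The engine of the proof is a one-step lemma: if $\hat{C}(X,C)$ is bad then there is a finite $X'\supsetneq X$ with $X'\setminus X\subseteq V(C)$ such that some sub-object is again bad --- either a region $\hat{C}(X',C')$ for a strong component $C'\subseteq C$ of $D-X'$, or a bundle region between two such components or between a vertex of $X'\setminus X$ and such a component. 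Iterating produces a strictly increasing chain $X_0\subsetneq X_1\subsetneq\cdots$ and nested bad regions, whose chosen components (respectively bundles) by solidity determine a vertex-direction or an edge-direction of $D$; by Theorem~\ref{thm: ends correspond bijectively to vertex directions} (respectively Theorem~\ref{thm: limit edges correspond bijectively to edge directions}) this is an end $\omega$ or a limit edge $\lambda$. That point is covered by some $O\in\mathcal{O}$, which being basic is a neighbourhood $\hat{C}_\varepsilon(Y,\omega)$ or $\hat{E}_{\varepsilon,z}(Y,\cdot)$; for $X_n\supseteq Y$ the corresponding bad region is contained in $O$ up to the collars of the finitely many edges meeting $X_n\setminus Y$, and covering those by finitely many further members of $\mathcal{O}$ contradicts its badness.

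The crux, and the step I expect to be the main obstacle, is the one-step lemma --- concretely, covering the ``glue'' $\hat{C}(X,C)\setminus\bigcup_{C'}\hat{C}(X',C')$ by finitely many members of $\mathcal{O}$, after which badness of $\hat{C}(X,C)$ forces badness of one of the finitely many (by solidity) sub-objects. The vertices of $X'\setminus X$ are finite and absorbed by finitely many vertex-stars, and a finite bundle contributes only finitely many edge-middles; the delicate case is an infinite bundle $B=E(C'_i,C'_j)$. Here I would use the Directed Star-Comb Lemma inside the strongly connected $C'_j$ (or $C'_i$), together with Proposition~\ref{lemma: char limit edge type I} and Proposition~\ref{lemma: char limit edge type II}, to produce a limit edge $\lambda_B$ living in $B$; the key topological feature is that a single neighbourhood $\hat{E}_{\varepsilon,z}(X',\lambda_B)$ simultaneously contains the $\varepsilon$-collar at height $z$ of \emph{every} edge and \emph{every} limit edge living in $B$. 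Since the arc $\lambda_B$ is compact, finitely many such neighbourhoods, taken from $\mathcal{O}$ around a finite set of heights covering the middle of the unit interval, cover the middle segments of all edges and limit edges of $B$ at once (their end-collars already lying in the adjacent subregions). This is precisely what renders the glue finitely coverable and makes the descent terminate at a covered end or limit edge, yielding the required contradiction; where no infinite bundle is present the glue is finitely coverable outright and the descent simply passes to a smaller bad component region.
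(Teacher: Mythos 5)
Your contrapositive argument for ``compact $\Rightarrow$ solid'' is fine and is essentially the paper's proof in different clothing (the paper picks one vertex per strong component, observes that this set is closed and discrete, and covers its complement; your disjoint regions $O_i$ achieve the same effect). The genuine gap is in the main direction. Your descent produces only a countable increasing chain $X_0\subsetneq X_1\subsetneq\cdots$ of finite sets with nested bad regions, and that is not enough to ``determine a vertex-direction or an edge-direction''. A direction must assign a component or bundle of $D-Y$ to \emph{every} $Y\in\cX(D)$, consistently, and Theorems~\ref{thm: ends correspond bijectively to vertex directions} and~\ref{thm: limit edges correspond bijectively to edge directions} apply only to such maps. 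A countable chain can be cofinal in $\cX(D)$ only when $V(D)$ is countable, and even then your one-step lemma adds vertices only from inside the current component ($X'\setminus X\subseteq V(C)$), so your chain need not be cofinal. For $Y\not\subseteq X_n$ there is no canonical value $f(Y)$: the nested bad components $C_n$ may meet several strong components of $D-Y$, so the chain does not extend to a direction, and no end or limit edge is produced. (For the vertex case one can in fact extract an end living in all the $C_n$ by a necklace argument using solidity, but you do not give this, and the bundle case is murkier still.)

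Even granting such an end $\omega$, your final contradiction step fails: it needs some $X_n\supseteq Y$, where $Y$ is the finite vertex set attached to the basic cover set $O\ni\omega$. But $Y$ is revealed only after the infinite descent is complete, and your increments stay inside the shrinking components, so no such $n$ need exist; and without $Y\subseteq X_n$ the region of $C_n$ need not lie in $O$ (for instance $C_n$ may still meet $Y$, whence $C_n\not\subseteq C(Y,\omega)$). The same flaw appears in miniature in your treatment of infinite bundles: the basic sets covering your limit edge $\lambda_B$ carry their own finite sets $X_O$ and contain the collars of the edges living in the bundles $E(X_O,\cdot)$, not of all edges of $B$, unless $X_O\subseteq X'$ --- which you cannot arrange. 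The paper resolves both difficulties with one device that your proposal is missing: it forms the filter base of unions of bad regions, extends it to an ultrafilter, and reads off from the ultrafilter a bad component or bundle for \emph{every} finite vertex set simultaneously. This really is a direction, and, crucially, badness is then available at exactly the finite sets $X_O$ that come with the covering basic open sets, which is what the concluding contradiction (single neighbourhood $\hat{C}_\varepsilon(X,\omega)$ in the end case, compactness of the arc plus enlarging $X$ beyond all the $X_O$ in the limit-edge case) actually uses. Without the ultrafilter, or some equivalent transfinite/maximality argument, your descent cannot be completed for uncountable digraphs, and as written it is incomplete even for countable ones.
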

\begin{proof}
We prove the forward implication by contraposition. If $D$ is not solid let~$X$ be a finite vertex set such that $D-X$ has infinitely many strong components. We obtain an open cover $\cO$ of $|D|$ that has no finite subcover as follows. Fix for every strong component $C$ of $D-X$ a vertex $u_c \in C$ and denote by $U$  the set of all the vertices $u_c$. It is straightforward to check that every point in $|D|\setminus U$ has a basic open neighbourhood that avoids $U$; this shows that $U$ is closed in $|D|$. Let $\cO$ consist of the uniform stars of radius $\frac{1}{2}$ around each $u_c$ and the open set $|D| \setminus U$. Then, $\cO$ is the desired open cover.

Now, let us prove the backward implication. For this, let $D$ be any solid digraph and let $\cO$ be an open cover of $|D|$. We may assume that $\cO$ consists of basic open sets. For every $X\in\cX(D)$ and every strong component $C$ of $D-X$, we let $\hat{C}$ be the union of the point set of $C$, the set of all the ends that live in $C$ and the  point set of all the limit edges that live in $C$. For a bundle $F$ of $D-X$, let $\hat{F}$ consist of the inner points of edges in $F$ and all the inner points of limit edges that live in $F$.  A strong component $C$ of $D-X$ is \emph{bad for $X$} if $\hat{C}$ is not covered by any cover set in~$\cO$. A bundle $F$ of $D-X$ is \emph{bad for $X$} if $\hat{F}$ is not covered by finitely many cover sets in $\cO$. A bad strong component for $X$ or a bad bundle for $X$  is a \emph{bad set for}~$X$. 

If there is no bad set for some $X\in \cX(D)$, we find a finite subcover as follows. For every strong component $C$ of $D-X$ fix a cover set from $\cO$ that covers $\hat{C}$. And for every bundle of $D-X$ fix finitely many cover sets from $\cO$ that cover $\hat{F}$. Note that our assumption that $D$ is solid ensures that there are only finitely many strong components and bundles of $D-X$. Therefore, we have fixed only finitely many cover sets in total. Combining these with a finite subcover of $D[X]$, which exists because $D[X]$ is a finite digraph, yields a finite subcover of $|D|$. Note that all the edges between vertices $x \in X$ and strong components of $D-X$ are covered, as they are bundles.

So let us assume for a contradiction that there is a bad set for every \hbox{$X\in \cX(D)$}. We will find a bad set for every \hbox{$X \in \cX(D)$} in a consistent way, i.e., for every two vertex sets $X,Y \in \cX(D)$ the bad set of $X$ contains that of $Y$ whenever $X \subseteq Y$. In other words the bad sets will give rise to a direction $f$ and we will then conclude that $f(X)$ is covered by finitely many sets in $\cO$ for some $X\in \cX(D)$, contradicting that $f(X)$ is~bad.

Given $X\in \cX(D)$, let $B_X$ be the union of all the sets $\hat{B}$ for which $B$ is bad for~$X$. It is straightforward to see that \{\,$B_X \colon X \in \cX $\,\} is a filter base on $|D|$ and we denote by $\cB$ some ultrafilter that extends it. On the one hand, $\cB$ contains for every $X\in \cX(D)$ at most one set $\hat{C}$ or $\hat{F}$ with $C$ a strong component of $D-X$ or $F$ a bundle of $D-X$, respectively, because intersections of filter sets are non-empty. On the other hand, there is at least one strong component $C$ or bundle $F$ of $D-X$ such that $\hat{C}$ or $\hat{F}$ is contained in $\cB$: Otherwise, $\cB$ contains $|D| \setminus \hat{C}$ and $|D| \setminus \hat{F}$ for every strong component of $D-X$ respectively every bundle $F$ of $D-X$. As $\cB$ does not contain the point set of $D[X]$ we have $|D| \setminus D[X] \in \cB$. But, the intersection of all the $|D| \setminus \hat{C}$ and $|D| \setminus \hat{F}$  with $|D| \setminus D[X]$ is empty. Consequently, $\cB$ contains for every $X \in \cX(D)$ exactly one set of the form $\hat{C}$ or $\hat{F}$ with $C$ a strong component of $D-X$ or $F$ a bundle of $D-X$, respectively. As intersections of filter sets are non-empty, these bundles and strong components form a direction $f$. Note, that for every $X\in \cX(D)$ the set $f(X)$ is bad for $X$ as it is the superset of $B_Y$ for some~$Y \in \cX(D)$.

In order to arrive at a contradiction we consider three cases. First, if $f$ is a vertex-direction, then by Theorem~\ref{thm: ends correspond bijectively to vertex directions}, we have that $f$ corresponds to an end $\omega$ which is covered by some cover set $O\in \cO$. As $O$ is a basic open set it is of the form $\hat{C}_\varepsilon(X,\omega)$ for some $X\in \cX(D)$. This contradicts that $f(X)$ is bad. 

Second, suppose that $f$ is an edge-direction and that $f$ corresponds  to a limit edge $\omega \eta$ between  ends in the sense of Theorem~\ref{thm: limit edges correspond bijectively to edge directions}. This limit edge  $\omega \eta$ is covered by a finite subset $\cO'\subseteq \cO$, as it is homeomorphic to the unit interval. Since each cover set $O\in \cO'$ is basic open it comes by its definition together with a finite vertex set $X_O\in \cX(D)$. Let $\cX':=\{\,X_O\mid O\in \cO' \,\}$ and let $X$ be large enough so that it contains $\bigcup \cX'$ and so that it separates $\omega$ and $\eta$. To get a contradiction, we show that $ \widehat{f(X)}$ is covered by $\cO'$. Consider a point $z \in  \widehat{f(X)}$ and let $z'$ be its corresponding point on $\omega \eta$. Then $z'$ is covered by some $O\in \cO'$. Since $X_O\subseteq X$ we have $f(X) \subseteq f(X')$ and therefore $O'$ also contains $z$. 

Finally, the case that $f$ is an edge-direction and $f$ corresponds to a limit edge between an  end and a vertex is analogue to the second case. \end{proof}

We complete this section by listing a few more properties that are equivalent to the assertion that $|D|$ is compact.

\begin{cor}\label{cor: listofeuiv.solid}
The following statements are equivalent for any digraph $D$: \begin{enumerate}
    \item $|D|$ is compact;
    \item every closed set of vertices is finite;
    \item $D$ has no $U$-rank for any infinite vertex set $U$\!;
    \item for every infinite set $U$ of vertices there is a necklace attached to $U$\!;
     \item $D$ is solid.
\end{enumerate}
\end{cor}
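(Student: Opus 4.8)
The plan is to take Theorem~\ref{thm: compact iff solid} as given, which already supplies the equivalence (i)~$\Leftrightarrow$~(v), and to route the remaining three statements through the topological condition~(ii). Concretely, I would close the cycle (i)~$\Rightarrow$~(ii)~$\Rightarrow$~(v) to fold (ii) into the compact/solid pair, and separately prove (ii)~$\Leftrightarrow$~(iv)~$\Leftrightarrow$~(iii) using the two ``single set'' instances of the Necklace Lemma (Lemma~\ref{lemma: necklace lemma}) and of the extended closure lemma (Lemma~\ref{lemma: connection closure; direction; necklace; EXTENDED top. closure}). Since (ii) appears in both chains, this renders all five statements equivalent.

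For (i)~$\Rightarrow$~(ii) I would first observe that $V(D)$ carries the discrete subspace topology: a uniform star of small radius around a vertex $v$ contains no other vertex. Hence any closed set $W$ of vertices is a closed, and therefore compact, subset of the compact space $|D|$, and a compact discrete space is finite. For (ii)~$\Rightarrow$~(v) I would argue contrapositively and reuse the construction from the forward direction of Theorem~\ref{thm: compact iff solid}: if $D$ is not solid, pick $X \in \cX(D)$ with infinitely many strong components of $D-X$ and choose one vertex $u_C$ in each; the resulting infinite set $U = \{\, u_C \,\}$ was shown there to be closed in $|D|$, contradicting~(ii).

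To link (ii) with the combinatorial conditions I would first reformulate (ii) topologically: the only points of $|D|$ that can lie in the closure of a vertex set without being vertices themselves are ends, since inner points of (limit) edges always have basic neighbourhoods avoiding $V(D)$. Thus (ii) is equivalent to the statement that every infinite vertex set $U$ has an end in its closure. Applying Lemma~\ref{lemma: connection closure; direction; necklace; EXTENDED top. closure} with the singleton family $\cU = \{U\}$ turns ``an end in $\overline{U}$'' into ``a necklace attached to $U$'', giving (ii)~$\Leftrightarrow$~(iv). Finally, the Necklace Lemma with $\cU = \{U\}$ says that for each infinite $U$ exactly one of ``necklace attached to $U$'' and ``$D$ has a $U$-rank'' holds, so quantifying over all infinite $U$ yields (iv)~$\Leftrightarrow$~(iii).

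Most of the argument is bookkeeping once the right instances of the two lemmas are invoked; the main obstacle I anticipate is the correct handling of (ii), namely pinning down exactly which points can occur in the closure of a set of vertices, so that ``every closed set of vertices is finite'' cleanly translates into ``every infinite vertex set has an end in its closure''. The compactness-gives-finiteness step in (i)~$\Rightarrow$~(ii) and the reuse of the non-solid construction in (ii)~$\Rightarrow$~(v) are then short, and the singleton-family applications of Lemma~\ref{lemma: necklace lemma} and Lemma~\ref{lemma: connection closure; direction; necklace; EXTENDED top. closure} are immediate.
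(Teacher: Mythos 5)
Your proof is correct, and its logical structure is sound: combining (i)~$\Rightarrow$~(ii)~$\Rightarrow$~(v) with the equivalence (i)~$\Leftrightarrow$~(v) of Theorem~\ref{thm: compact iff solid} makes (i), (ii), (v) equivalent, and the chain (ii)~$\Leftrightarrow$~(iv)~$\Leftrightarrow$~(iii) attaches the remaining two statements. However, your decomposition differs from the paper's, which proves the single cycle (i)~$\Rightarrow$~(ii)~$\Rightarrow$~(iii)~$\Rightarrow$~(iv)~$\Rightarrow$~(v)~$\Rightarrow$~(i). Several steps coincide in substance: the paper's (i)~$\Rightarrow$~(ii) is your compactness argument in open-cover form, its (iv)~$\Rightarrow$~(v) is your one-vertex-per-strong-component construction (which you instead use for (ii)~$\Rightarrow$~(v)), and its (v)~$\Rightarrow$~(i) is Theorem~\ref{thm: compact iff solid}. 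The real divergence is how the rank condition (iii) is handled. The paper proves (ii)~$\Rightarrow$~(iii) directly by a minimality argument on ranks: assuming some infinite $U$ has a $U$-rank, it picks $U$ with this rank minimal, takes $X$ witnessing the rank, argues that every strong component of $D-X$ then meets $U$ only finitely, and concludes that $U$ is closed and infinite. You avoid this transfinite minimality trick entirely: you translate (ii) into ``every infinite vertex set has an end in its closure'' --- which rests on exactly the point you flag, that $\overline{U}\setminus U$ can contain only ends, since vertices outside $U$, inner points of edges, and inner points of limit edges all have basic neighbourhoods avoiding $V(D)$ --- then invoke Lemma~\ref{lemma: connection closure; direction; necklace; EXTENDED top. closure} with $\cU=\{U\}$ to get (ii)~$\Leftrightarrow$~(iv), and the exclusive dichotomy of the Necklace Lemma (Lemma~\ref{lemma: necklace lemma}) to get (iv)~$\Leftrightarrow$~(iii); the paper uses that same dichotomy only in the direction (iii)~$\Rightarrow$~(iv). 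What your route buys is that no separate argument about ranks is ever needed, the rank condition entering only through the Necklace Lemma; the price is reliance on Lemma~\ref{lemma: connection closure; direction; necklace; EXTENDED top. closure}, which the paper states but does not actually use in its own proof of this corollary.
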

 
\begin{proof} 
(i)$\rightarrow$(ii): If $U\subseteq V(D)$ is closed and infinite, then any open cover that consists of $|D|-U$ and pairwise disjoint open neighbourhoods for the vertices in $U$ has no finite subcover.

(ii)$\rightarrow$(iii): Suppose that there is an infinite vertex set $U$ for which $D$ has a \hbox{$U$-rank $\alpha$.} We may choose $U$ so that $\alpha$ is minimal. Let $X \in \cX(D)$ witness that $D$ has $U$-rank $\alpha$. By the choice of $U$, all the strong components of $D-X$ contain only finitely many vertices of $U$. Hence, $U$ is closed in $|D|$, as every point in $|D|$ has an open neighbourhood that avoids $U$.

(iii)$\rightarrow$(iv) This is immediate by the necklace lemma.

(iv)$\rightarrow$(v) If $D$ is not solid, say $D-X$ has infinitely many strong components for $X \in \cX(D)$; then let $U$ be a vertex set that contains exactly one vertex of every strong component of $D-X$. Clearly, there is no necklace attached to $U$.

(v)$\rightarrow$(i) Theorem~\ref{thm: compact iff solid}.
\end{proof}

\section{The space $|D|$ as an inverse limit}\label{section: inverse limit}
\noindent In this section we show that the space $\vert D \vert$ for a solid digraph $D$ can be obtained as an inverse limit of finite contraction minors of $D$, Theorem~\ref{thm: |D| homeo inverse limit]}. We begin by defining an inverse system of finite digraphs for any digraph. Then, we  show that every digraph embeds in the inverse limit of its inverse system. This gives a compactification for arbitrary digraphs, Theorem~\ref{thm: compact. for any digraph}. 
 
Let us introduce an inverse system for a given digraph $D$. For this, we define a directed partially ordered set $(\cP,\le)$ as follows. We call a finite partition $P$ of  $V(D)$ \emph{admissible} if any two partition classes of $P$ can be separated in $D$ by a finite vertex set. We denote by $\mathcal{P}:=\cP(D)$ the set of all the admissible partitions of $D$. For any two partitions $P_{1}$ and $P_2$ of the vertex set of $D$ we write $P_{1} \leq P_2$ and say that   $P_2$ is  \emph{finer} than $P_{1}$ if every partition class of $P_2$ is a subset of a partition class~of $P_{1}$.

We claim that the set of admissible partitions is a directed partially ordered set. Indeed, the relation $\leq$ is easily seen to be a partial order on the set of  all the partitions of $V(D)$. In particular, it restricts to a partial order on the set of all the admissible partitions. To see that $\mathcal{P}$ is  directed, let $P, P^\prime \in \mathcal{P}$ be admissible partitions, and let $P^{\prime\prime}$ be the partition that consists of all the non-empty sets of the form $p \cap p^\prime$ with $p \in P$ and $p^\prime \in P^\prime$. Clearly,  $P^{\prime \prime}$ is finer than both $P$ and $P^\prime$. To see that $P''$ is admissible, let any two distinct  partition classes of $P^{\prime \prime}$ be given, say $p_1 \cap p_1^\prime$ and $p_2 \cap p_2^\prime$  with $p_1,p_2\in P$ and $p_1', p_2'\in P'$. As these partition classes are distinct, we have $p_1 \neq p_2$ or $p_1^\prime \neq p_2^\prime$, say $p_1 \neq p_2$. Since $P$ is admissible $D$ has a finite vertex set that separates $p_1$ and $p_2$, which in particular separates $p_1 \cap p_1^\prime$ and~$p_2 \cap p_2^\prime$.

Let us proceed by defining the topological spaces associated with the admissible partitions of $D$. Every admissible partition $P$ of $D$ gives rise to a finite (multi-) digraph $D/P$ by contracting each partition class and replacing all the edges between two partition classes by a single edge whenever there are infinitely many. Formally, declare $P$ to be the vertex set of $D/P$. Given distinct partition classes $p_1,p_2 \in P$ we define an edge $(e,p_1,p_2)$ of $D/P$ for every edge $e$ in $D$ from $p_1$ to $p_2$ if there are finitely many such edges. And if there are infinitely many edges from $p_1$ to $p_2$ we just define a single edge $(p_1p_2, p_1, p_2)$.  We call the latter type of edges  \emph{quotient edges}. Endowing $D/P$ with the $1$-complex topology turns it into a compact Hausdorff space, i.e., basic open sets are uniform $\varepsilon$ stars around vertices and open subintervals of edges. In other words, $D/P$ is defined as our topological space from the previous section (for finite $D$) with the only difference that multi-edges are taken into account. We will usually not distinguish between the finite \mbox{(multi-)} digraphs  $D/P$ and  the topological space $|D/P|$. 
Now, let us turn to the final ingredient of our inverse system for $D$: bonding maps. We define for every two distinct admissible partitions $P\le P'$ of $D$ a bonding map $f_{P' P} \colon D / P' \to D / P$ as follows. Vertices $p^\prime \in P^\prime$ of $ D / P^\prime$ get mapped to the unique vertex $p \in P$ of $D / P$ with $p^\prime \subseteq p$. Edges get mapped according to their endvertices: For edges $ (e',p_1',p_2')$ of $D / P'$ we consider two cases:  First, if  $p_1',p_2'\subseteq p$ for a partition class $p \in P$, then $(e',p_1',p_2')$ gets mapped to the vertex $p$ of $D / P$. Second, if $p_1' \subseteq p_1$ and $p_2' \subseteq p_2$ for two distinct partition classes $p_1,p_2\in P$, then there is at least one edge from $p_1$ to $p_2$ in $D/P$. If $ (e',p_1',p_2')$ is a quotient edge in $D / P'$, then also $(p_1p_2,p_1 , p_2)$ is a quotient edge in $D / P$ and we map $(e',p_1',p_2')$ to $(p_1p_2,p_1,p_2)$. If $ (e',p_1',p_2')$ is not a quotient edge in $D / P'$ and  $(p_1p_2,p_1,p_2)$ is a quotient edge in $D / P$, we map $(e',p_1',p_2')$ to $(p_1p_2,p_1,p_2)$. Finally, if $(e',p_1',p_2')$ is not a quotient edge and there is no quotient edge between $p_1$ and $p_2$, then $(e',p_1,p_2)$ is an edge in $D / P$ and we map $(e',p_1',p_2')$ to $(e',p_1,p_2)$.

It is straightforward to check that the $f_{P' P}$ are continuous and that we have  $f_{P^{\prime \prime}P} = f_{P^{\prime} P} \circ f_{P^{\prime \prime} P^{\prime} }$ for all admissible partitions $P\le P'\le P''$.
The bonding maps turn $\{ D/P,f_{P' P},\cP \}$ in an inverses system and we denote its inverse limit by~$\varprojlim (D/P )_{ P \in \mathcal{P} }$. Note that $\varprojlim (D/P )_{ P \in \mathcal{P} }$ is non-empty, as the collection of points that consists for every $P\in \mathcal{P}$ of the vertex of $D/P$ that contains a fixed vertex of $D$ is an element of $\varprojlim (D/P )_{ P \in \mathcal{P} }$.

Our next goal is to  find an embedding from $D$ to the inverse limit $\varprojlim (D/P )_{ P \in \mathcal{P} }$ witnessing that the inverse limit is a Hausdorff compactification of $D$. We obtain this embedding by defining continuous maps $\varphi_P \colon D \to D/P$, one for every admissible partition $P \in \cP$. Once the $\varphi_P$ are defined, the universal property of the inverse limit gives rise to the desired embedding.

So let us define $\varphi_P$ for a given admissible partition $P \in \cP$. For a vertex $v$ of $D$ let $\varphi_P(v)$ be the partition class of $P$ that contains $v$. For an inner point $z$ of an edge $vw$ of $D$, consider the partition classes that contain $v$ and $w$, respectively. If these coincide, map $z$ to the partition class that contains $v$ and $w$. Otherwise the partition classes that contain $v$ respectively $w$ differ and there is an edge $e$ from $\varphi_P(v)$ to $\varphi_P(w)$ in $D/P$, which is either a copy of $vw$ considered as an edge of $D/P$ or a quotient edge. Map $z$ to its corresponding point on $e$. It is straightforward to see that $\varphi_P$ is continuous for every $P \in \cP$ and that $f_{P' P} \circ \varphi_{P'} = \varphi_P$  for every admissible partitions $P\le P'$, i.e., the collection of maps $\{ \, \varphi_P \mid P \in \cP \, \}$ is compatible. Hence, by the universal property of the inverse limit, the $\varphi_P$ induce a map $ \Phi \colon D \to \varprojlim (D/ P )_{ P \in \mathcal{P} } $ with $\varphi_P = f_P \circ \Phi$, for every $P \in \mathcal{P}$.

\begin{theorem}\label{thm: compact. for any digraph}
For every digraph $D$ the space $\varprojlim (D/P )_{ P \in \mathcal{P} }$ is a Hausdorff compactification of $D$, in particular, the map $$ \Phi \colon D \to \varprojlim (D/P )_{ P \in \mathcal{P} } $$ is an embedding and its image is dense in $\varprojlim (D/P )_{ P \in \mathcal{P} }$.
\end{theorem}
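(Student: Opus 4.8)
The plan is to verify the three defining features of a Hausdorff compactification in turn: that $\varprojlim (D/P)_{P\in\mathcal{P}}$ is compact and Hausdorff, that $\Phi$ is an embedding, and that $\Phi(D)$ is dense. The first is immediate from the general theory recalled in Section~\ref{section: tools and terminology}: each $D/P$ carries the $1$-complex topology and is therefore compact Hausdorff, and the inverse limit of compact Hausdorff spaces is closed in the product (compact by Tychonoff) of the $D/P$, hence itself compact Hausdorff. Since $\Phi$ is the map induced by the compatible family $\{\varphi_P\}$ through the universal property, it is continuous and satisfies $\varphi_P = f_P\circ\Phi$ for all $P$. It thus remains to prove that $\Phi$ is injective, that it is open onto its image, and that its image is dense.

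For injectivity I would invoke the Lifting Lemma (Lemma~\ref{lemma: eventually injectiv}) and check that the family $\{\varphi_P\}$ is eventually injective. The workhorse is the observation that for any finite set $S\subseteq V(D)$ the partition $P_S$ consisting of the singletons $\{s\}$ for $s\in S$ together with $V(D)\setminus S$ is admissible: a singleton $\{s\}$ separates itself from every other class because each path issuing from or arriving at $s$ meets $\{s\}$ in its first or last vertex. Given two distinct points $x,y$ of $D$, I choose $S$ to contain the endpoints of the edge(s) involved and read off $\varphi_{P_S}(x)\neq\varphi_{P_S}(y)$: two distinct vertices land in distinct classes; a vertex and an inner edge point differ once the edge's endpoints are split, since the edge point is then mapped into the interior of an edge of $D/P_S$ while the vertex maps to a vertex; and since $D$ is simple there is at most one edge between two singletons, which therefore appears as a single non-quotient edge of $D/P_S$, so two inner points of distinct edges are sent to distinct edges, and two inner points of a common edge to distinct interior points of the same edge.

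For the embedding it suffices to show that every basic open set $O$ of the subspace $D\subseteq|D|$ has the form $\varphi_P^{-1}(U)=\Phi^{-1}(f_P^{-1}(U))$ for some admissible $P$ and open $U\subseteq D/P$; injectivity of $\Phi$ then gives $\Phi(O)=f_P^{-1}(U)\cap\Phi(D)$, which is open in $\Phi(D)$. If $z$ is an inner point of an edge $vw$, I take $P=\{\{v\},\{w\},V(D)\setminus\{v,w\}\}$, under which $vw$ is the unique, hence non-quotient, edge from $\{v\}$ to $\{w\}$, and let $U$ be a small ball around $\varphi_P(z)$ in the interior of that edge, which pulls back exactly to a ball around $z$. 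If $x=v$ is a vertex, I take $P=\{\{v\},V(D)\setminus\{v\}\}$ and let $U$ be the uniform star of radius $\varepsilon$ around the vertex $\{v\}$ of $D/P$. The point to check, and the main obstacle of the proof, is that this pulls back to the uniform star of radius $\varepsilon$ around $v$ even when $v$ has infinite degree: all out-edges (respectively in-edges) of $v$ are then merged into a single quotient edge of $D/P$, but because $\varphi_P$ sends an inner point at parameter $t$ to the point at parameter $t$ of the corresponding edge, the initial segment of each quotient edge pulls back simultaneously to the initial segments of all edges at $v$, which together with $v$ itself form exactly the uniform star around $v$.

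Finally, for density I would apply Lemma~\ref{lemma: image dense in inverse limit} to the compatible family $\{\varphi_P\}$. Each $\varphi_P$ is surjective, since every class of $P$ is non-empty and so lies in the image of a vertex, while every edge of $D/P$ --- quotient or not --- is the image of the interior of some edge of $D$ between the relevant classes. As $\varprojlim (D/P)_{P\in\mathcal{P}}$ is non-empty (as noted before the theorem), Lemma~\ref{lemma: image dense in inverse limit} yields that $\Phi(D)$ is dense. Together with the embedding this exhibits $\varprojlim (D/P)_{P\in\mathcal{P}}$ as a Hausdorff compactification of $D$, completing the proof.
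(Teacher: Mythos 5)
Your proposal is correct and follows essentially the same route as the paper's proof: compactness and Hausdorffness from the general theory of inverse limits, injectivity via the Lifting Lemma applied to the eventually injective family $\{\varphi_P\}$, openness onto the image checked on the base of uniform stars and open edge-subintervals using admissible partitions with the relevant singleton classes, and density via Lemma~\ref{lemma: image dense in inverse limit} from surjectivity of the $\varphi_P$. The only difference is one of detail, not of method: you spell out points the paper leaves implicit, namely that singleton-splitting partitions are admissible and that uniform stars are saturated under $\varphi_P$ even at vertices of infinite degree, where the paper simply asserts the corresponding inclusion.
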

\begin{proof}
We have to show that $\varprojlim (D/P )_{ P \in \mathcal{P} } $ is compact and Hausdorff, that the image of $\Phi$ is dense in $\varprojlim (D/P )_{ P \in \mathcal{P} } $ and that $\Phi$ is an embedding i.e., it is a homeomorphism onto its image. 
The inverse limit  $\varprojlim (D/P )_{ P \in \mathcal{P} } $ is compact and Hausdorff because all the topological spaces $D/P$ are compact and Hausdorff. As every $\varphi_P$ is surjective the image of $\Phi$ is dense in $\varprojlim (D/P )_{ P \in \mathcal{P} } $, by Lemma~\ref{lemma: image dense in inverse limit}. In order to show that $\Phi$ is a homeomorphism onto its image, note first that the collection of maps $\{ \, \varphi_P \mid P \in \cP \, \}$ is  eventually injective. Hence $\Phi$ is injective by the lifting lemma.

It remains to show that the inverse of $\Phi$ is continuous, for which we equivalently show that $\Phi$ is open onto its image, i.e.,  the image under $\Phi$ of open sets in $D$ is open in $\Phi(D)$. It suffices to show this on a base for the open sets in $D$. We prove that $\Phi$ is open for the base $\cB$ given by the open uniform stars around vertices and the open subintervals of edges. Our goal is to find for every $B \in \cB$ an open set $O$ such that $\Phi(B) = O \cap \Phi(D)$.  First consider the case where $B=B_\varepsilon(v)$ is an open ball of radius $\varepsilon$  around a vertex $v$. Then let $P$ be any admissible partition in which $\{ v \}$ is a singleton partition class. In $D / P$ we have that $\varphi_P (B) $ is an open ball of radius $\varepsilon$ around the vertex $\varphi_P(v)$. We claim that $O:= f_P^{-1} ( \varphi_P(B) ) $ is the desired open set. Clearly, $\Phi(B) \subseteq O \cap \Phi(D)$, we prove the converse inclusion. For this let $x \in O \cap \Phi(D)$ be given. Let $d \in D$ be the preimage of $x$ under $\Phi$. We have to show that $d \in B$. If $d \not\in B$, then $\varphi_P(d) \not\in \varphi_P(B)$, contradicting the fact that $x \in O$.

Second let $B$ be an open subinterval of an edge $e$ in $D$, say with end points $v$ and $w$. Then let $P$ be any admissible partition in which $\{ v \}$ and $\{ w \}$  are singleton partition classes. A similar argument as above shows that $O:= f_P^{-1} ( \varphi_P(B) ) $ is as desired.
\end{proof}

\noindent For example consider the directed ray $R$. Note first that every admissible partition of $R$ has exactly one infinite partition class. One can check that $\varprojlim (R/P )_{ P \in \mathcal{P} }$ is homeomorphic to the space where one adds a single point $\omega$ at infinity to $R$ and where a neighbourhood base of $\omega$ is given by the tails of $R$ together with $\omega$.

We now extend the maps $\varphi_P$ to maps   $ \hat{\varphi}_P \colon |D| \to D/P$. For this we define how $\hat{\varphi}_P$ behaves on ends and on inner points of limit edges; the values of $\hat{\varphi}_P$ on $D$ are then given by the values of $\varphi_P$ on $D$. For an  end $\omega$ of $D$ all the rays that represent $\omega$ have a tail in the same partition class $p$ of $P$. The reason for this is that any two partition classes of $P$ can be separated by a finite vertex set. Here we map $\omega$ to $p$. 

Now, consider an inner point $z$ of a limit edge~$\lambda$. Note that we have already defined the images of the two endpoints of~$\lambda$. If these images coincide, then map $z$ to the unique image of the endpoints of~$\lambda$. Otherwise, Proposition~\ref{lemma: char limit edge type I} or Proposition~\ref{lemma: char limit edge type II} gives rise to a quotient edge $\lambda'$  between the partition classes of the endpoints of~$\lambda$. In this case we map $z$ to the corresponding point on $\lambda'$. This completes the definition of $\hat{\varphi}_P$.

\begin{lemma}\label{lemma: phiHat are continuous}
The  map $\hat{\varphi}_P \colon |D| \to D/P  $ is continuous for every $P\in \cP$. 
\end{lemma}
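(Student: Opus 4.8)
The plan is to verify continuity of $\hat{\varphi}_P$ pointwise: for each $x \in |D|$ and each basic open neighbourhood $O$ of $\hat{\varphi}_P(x)$ in $D/P$, I will exhibit a basic open neighbourhood $N$ of $x$ in $|D|$ with $\hat{\varphi}_P(N) \subseteq O$. Since the restriction of $\hat{\varphi}_P$ to $D$ is the already-continuous map $\varphi_P$, the two cases where $x$ is a vertex or an inner point of an edge of $D$ require no new work beyond recording that uniform stars map into uniform stars of the same radius, and that $\varphi_P$ preserves the edge parameter (so an edge with both endpoints in one partition class maps constantly to that vertex, and otherwise maps homeomorphically onto its image edge). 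It thus remains to treat the two genuinely new cases: $x$ an end, and $x$ an inner point of a limit edge.

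The key observation, which drives both remaining cases, is a confinement property of admissible partitions. Given an end $\omega$ with $\hat{\varphi}_P(\omega) = p$ (the class in which every ray representing $\omega$ eventually lies), I will choose, for each of the finitely many classes $q \in P$ with $q \neq p$, a finite vertex set $X_q$ separating $p$ and $q$ (these exist by admissibility of $P$), and set $X_0 := \bigcup_{q \neq p} X_q \in \cX(D)$. Then $C(X_0, \omega) \subseteq p$: if $C(X_0, \omega)$ contained a vertex of some $q \neq p$, then, containing also the $p$-vertices on a ray tail of $\omega$, strong connectedness of $C(X_0, \omega)$ in $D - X_0 \subseteq D - X_q$ would provide both a $p$--$q$ and a $q$--$p$ path avoiding $X_q$, contradicting that $X_q$ separates $p$ and $q$. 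Consequently every vertex, end and limit edge living in $C(X_0, \omega)$ is sent to $p$, and the edges and limit edges of the collar emanate from (or enter) $p$-vertices; since $\varphi_P$ preserves the edge parameter, the neighbourhood $\hat{C}_\varepsilon(X_0, \omega)$ is mapped into the uniform $\varepsilon$-star around $p = \hat{\varphi}_P(\omega)$, giving continuity at $\omega$.

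For an inner point $x$ of a limit edge $\lambda$, say $\lambda = \omega\eta$, I will enlarge $X_0$ to a finite set $X \supseteq X_0$ that in addition separates $\omega$ and $\eta$; then, using $C(X, \omega) \subseteq C(X_0, \omega)$ and $C(X, \eta) \subseteq C(X_0, \eta)$, both components are confined to the classes $p_1 := \hat{\varphi}_P(\omega)$ and $p_2 := \hat{\varphi}_P(\eta)$. If $p_1 = p_2$, every edge and limit edge in (respectively living in) the bundle $E(X, \omega\eta)$ runs inside one class and hence maps to the vertex $p_1$, so the basic neighbourhood $\hat{E}_{\varepsilon,z}(X, \omega\eta)$ collapses to $\{p_1\} = \hat{\varphi}_P(x)$. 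If $p_1 \neq p_2$, then by the construction of $\hat{\varphi}_P$ there is a quotient edge $\lambda'$ from $p_1$ to $p_2$ in $D/P$, and every edge of $E(X, \omega\eta)$, as well as every limit edge living there, maps onto $\lambda'$ sending corresponding points to corresponding points; hence $\hat{E}_{\varepsilon,z}(X, \omega\eta)$ maps into the radius-$\varepsilon$ ball around $\hat{\varphi}_P(x)$ on $\lambda'$. The case of a limit edge incident with a vertex is identical, using Proposition~\ref{lemma: char limit edge type II} in place of Proposition~\ref{lemma: char limit edge type I}. The main obstacle is the confinement argument of the second paragraph: once one knows that enlarging $X$ forces the relevant strong components inside single partition classes, everything else reduces to parametrisation-preserving, locally star-based and interval-based bookkeeping.
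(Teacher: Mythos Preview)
Your proof is correct and takes essentially the same approach as the paper's: both rest on the confinement observation that a finite union of class-separators forces the relevant strong components inside single partition classes, after which the basic neighbourhoods $\hat C_\varepsilon$ and $\hat E_{\varepsilon,z}$ visibly map into uniform stars or edge-intervals of $D/P$; the paper merely packages this as ``preimage of basic open is open'' rather than pointwise continuity. One small slip to tighten: in the limit-edge case $\lambda=\omega\eta$, your $X_0$ from the second paragraph only isolates $p_1=\hat\varphi_P(\omega)$ from the other classes, so $C(X_0,\eta)\subseteq p_2$ need not follow; throw in the analogous separators for $p_2$ as well (the paper takes $X_p\cup X_{p'}$) before enlarging to an $X$ that also separates $\omega$ and~$\eta$.
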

\begin{proof}
In order to prove that $\hat{\varphi}_P$ is continuous, we show that the preimage of every open ball with radius $\varepsilon$ around  a vertex of $D/P$ is open in $|D|$  and that the preimage of every open subinterval of an edge in $D/P$ is open in $|D|$. As these open sets form a base of the topology of $D/P$, the map $\hat{\varphi}_P$ is continuous.

Consider an open ball $B_\varepsilon(p)$ of radius $\varepsilon$ around a vertex $p \in P$ in $D/P$. To see that $\hat{\varphi}_P^{-1}(B_\varepsilon(p))$ is open in $|D|$ we will define for every $y \in \hat{\varphi}_P^{-1}(p)$ an open set $O_y $ in $ |D|$ such that $\hat{\varphi}_P(O_y) \subseteq B_\varepsilon(p)$; in other words, the union of the open sets $O_{y}$ is included in $\hat{\varphi}_P^{-1}( B_\varepsilon(p))$. A closer look on the definition of the $O_y$  will show  that 
this latter inclusion is in fact an equality. 

So let $y \in |D|$ with $\hat{\varphi}_P(y)=p$ be given. To begin, if $y$ is a vertex of $D$ let $O_y$ be the open ball in $|D|$ of radius $\varepsilon$ around $y$. If $y$ is an inner point of an edge $e$ of $D$, then the whole edge $e$ is mapped to $p$ and we choose $O_y$ to be the interior of $e$. 
If $y$ is an end or an inner point of a limit edge, we fix a finite vertex set $X$ that separates $p$ from every other partition class in $P$. Note, that a strong component of $D-X$ is either contained in $p$ or is disjoint from $p$. If $y$ is an end, let $O_y$ be the basic open neighbourhood $\hat{C}_\varepsilon(X,y)$. Note that, by the choice of $X$, the strong component $C(X,y)$ is included in the partition class $p$. If $y$ is an inner point of a limit edge  $\lambda$, and $X$ separates the endpoints of this limit edge, then let $O_y=\hat{E}_{\varepsilon',y}(X, \lambda)$  with $\varepsilon'<\varepsilon$ small enough to fit into $\lambda$, i.e., such that  $B_{\varepsilon'}(y) \subseteq \lambda$ where the $B_{\varepsilon'}(y)$ is considered in the space $[0,1]_\lambda$;  otherwise let $C(X, \omega)$ be the strong component of $D-X$ that contains both endpoints of $\lambda$ and let $O_y= \hat{C}_\varepsilon(X,\omega)$.

Clearly, the union of the $O_y$ is included in $\hat{\varphi}_P^{-1}( B_\varepsilon(p))$. Moreover, for every $z  \neq  p$ in $B_\varepsilon(p)$ the set  $\hat{\varphi}_P^{-1}(z)$ is a set of inner points of  edges (possibly limit edges). Each such inner point is contained in an $\varepsilon$-neighbourhood of the endpoint $e$ that is mapped to $p$, for $e$ the edge that contains the inner point. Hence each of these inner points is contained in at least one of the open sets $O_y$.

Now, consider an open subinterval $B_\varepsilon(z)$ of radius $\varepsilon$ around $z$ for an inner point $z$ of an edge  $(e,p,p')$ of $D/P$. If $(e,p,p')$ is not a quotient edge of $D/P$, then $e$ is an edge of $D$ and the preimage of $B_\varepsilon(z)$ is an open subinterval of $e$ considered as an edge in $|D|$, namely around the point $\hat{\varphi}_P^{-1}(z)$ of radius $\varepsilon$. So suppose that $(e,p,p')$ is  a quotient edge.  We will find for every  point $y \in \hat{\varphi}^{-1}_P(z)$ an open neighbourhood $O_y$ of $y$ in $|D|$ with $O_y \subseteq \hat{\varphi}_P^{-1}( B_\varepsilon(z))$. 
A similar argument for every point in $B_\varepsilon(z)$ shows that $\hat{\varphi}_P^{-1} (B_\varepsilon(z) )$ is the union of open subsets in $\vert D \vert$.

Note that all the points in $\hat{\varphi}^{-1}_P(z)$ are inner points of edges in $|D|$ (possibly limit edges). Let $y \in \hat{\varphi}^{-1}_P(z)$ be given. First, if $y$ is an inner point of an edge of $D$, then let $O_y$ be the open subinterval of radius $\varepsilon$ around $y$. Second, suppose that $y$ is an inner point of a limit edge whose  end points are ends, say $\omega$ and $\eta$, and with $\hat{\varphi}_P(\omega)=p$ and $\hat{\varphi}_P(\eta)=p^\prime$. Fix finite vertex sets $X_p, X_{p^\prime}\subseteq V(D)$  that separate $p$ respectively $p^\prime$ from every other partition class in $P$. Note, that $X_p \cup X_{p^\prime} $ separates $\omega$ and~$\eta$. Now, every edge that is contained in or lives in $E(X_p \cup X_{p^\prime},\omega \eta)$ is mapped to $(e,p,p')$; thus the basic open neighbourhood  $\hat{E}_{\varepsilon,y}(X_p \cup X_{p^\prime},\omega\eta)$ is mapped to $B_\varepsilon(z)$. Finally, suppose that $y$ is an inner point of a limit edge $\lambda$ between a vertex $v$ and an  end $\omega$, say with $\hat{\varphi}_P(v)=p$ and $\hat{\varphi}_P(\omega)=p'$; the other case is analogue. Let $X_{p'}$ be a finite vertex set of $D$ that separates the partition class $p'$ from every other partition class in $P$. Then $\hat{E}_{\varepsilon,y}(X_{p'} \cup\{ v\}, \lambda )$ is mapped to $B_\varepsilon(z)$. 
\end{proof}

We are now ready to prove the main result of this section:

\begin{mainresult}\label{thm: |D| homeo inverse limit]}
Let $D$ be a solid digraph. The  map induced by  the $ \hat{\varphi}_P \colon |D| \to D/P $ $$\hat{\Phi}  \colon |D| \to \varprojlim (D/P )_{ P \in \mathcal{P} }$$ is a homeomorphism.
\end{mainresult}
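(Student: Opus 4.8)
The plan is to deduce the theorem from Lemma~\ref{lemma: compact into haussdorff gives hemeo}. Since $D$ is solid, the space $|D|$ is compact by Theorem~\ref{thm: compact iff solid}, and the inverse limit $\varprojlim (D/P)_{P\in\cP}$ is Hausdorff because each finite $1$-complex $D/P$ is. Hence it suffices to prove that $\hat\Phi$ is a continuous bijection onto $\varprojlim (D/P)_{P\in\cP}$; the lemma then upgrades it to a homeomorphism for free. The maps $\hat\varphi_P$ are continuous by Lemma~\ref{lemma: phiHat are continuous}, so the existence and continuity of $\hat\Phi$ will follow from the universal property of the inverse limit as soon as the family $\{\hat\varphi_P\mid P\in\cP\}$ is shown to be compatible.

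First I would verify compatibility, that is $f_{P'P}\circ\hat\varphi_{P'}=\hat\varphi_P$ for all admissible $P\le P'$. On the subspace $D$ this is precisely the compatibility of the $\varphi_P$ checked before Theorem~\ref{thm: compact. for any digraph}, so only the ends and the inner points of limit edges need attention. For an end $\omega$ all its representing rays have a tail in the $P'$-class $p'=\hat\varphi_{P'}(\omega)$, hence also in the $P$-class $p\supseteq p'$; as $f_{P'P}(p')=p=\hat\varphi_P(\omega)$ this case is immediate. For an inner point of a limit edge the check is the analogous bookkeeping, distinguishing whether the relevant edge of $D/P$ is a quotient edge and invoking Propositions~\ref{lemma: char limit edge type I} and~\ref{lemma: char limit edge type II}; I expect this to be routine.

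The heart of the argument is injectivity, which I would obtain from the Lifting Lemma~\ref{lemma: eventually injectiv} by showing that $\{\hat\varphi_P\}$ is eventually injective, i.e.\ that any two distinct points of $|D|$ are separated by some $\hat\varphi_P$. The crucial construction is, for $X\in\cX(D)$, the partition $P_X$ of $V(D)$ into the strong components of $D-X$ together with the singletons $\{x\}$ for $x\in X$. Since $D$ is solid this partition is finite, and it is admissible: two distinct strong components of $D-X$ are separated by $X$ itself (the condensation of $D-X$ is acyclic, so one of the two directions carries no path inside $D-X$), while any pair involving a singleton $\{x\}$ is separated by $\{x\}$, as every $\{x\}$--$B$ path has $x$ as its first vertex. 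Given distinct $y,y'$ I would then choose $X$ accordingly: two distinct ends are separated by some $X$ by definition and so land in distinct classes of $P_X$; a vertex is separated from an end, or from another vertex, by choosing $X$ to contain it; and two inner points lying on limit edges are separated by any $P_X$ fine enough to turn the relevant limit edge into a quotient edge of $D/P_X$ --- which exists by Propositions~\ref{lemma: char limit edge type I} and~\ref{lemma: char limit edge type II}, since the representing necklaces force infinitely many edges across $X$ --- whereupon distinct heights yield distinct images. Points of $D$ are already covered by the eventual injectivity of the $\varphi_P$. I expect this case analysis, together with confirming admissibility of the $P_X$, to be the main obstacle.

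Surjectivity then follows with no further case work. As $\hat\Phi$ is continuous and $|D|$ is compact, $\hat\Phi(|D|)$ is compact and hence closed in the Hausdorff inverse limit; moreover it contains $\hat\Phi(D)=\Phi(D)$, which is dense in $\varprojlim (D/P)_{P\in\cP}$ by Theorem~\ref{thm: compact. for any digraph}. A closed set containing a dense set is the whole space, so $\hat\Phi$ is onto. Being a continuous bijection from a compact space onto a Hausdorff space, $\hat\Phi$ is a homeomorphism by Lemma~\ref{lemma: compact into haussdorff gives hemeo}, which completes the proof.
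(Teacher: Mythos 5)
Your proposal is correct and follows essentially the same route as the paper's proof: compatibility of the $\hat\varphi_P$, eventual injectivity via the partitions $P_X$ together with the Lifting Lemma and Lemma~\ref{lemma: phiHat are continuous}, density of the image, compactness of $|D|$ forcing surjectivity, and Lemma~\ref{lemma: compact into haussdorff gives hemeo} to conclude. The only (immaterial) deviations are that you obtain density of the image from $\Phi(D)\subseteq\hat\Phi(|D|)$ and Theorem~\ref{thm: compact. for any digraph} rather than applying Lemma~\ref{lemma: image dense in inverse limit} directly to the surjections $\hat\varphi_P$, and that you spell out the admissibility of $P_X$, which the paper takes for granted.
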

\begin{proof}
It is straightforward to show that the $\hat{\varphi}_P$ are compatible. Let us show that the collection of maps $\{ \, \hat{\varphi}_P \mid P \in \cP \, \}$ is eventually injective, that is to say for  every two points $x,y \in |D|$ there is a $P\in \cP$ such that $\hat{\varphi}_{P}(x) \neq \hat{\varphi}_{P}(y)$. Such an admissible partition is easily defined if at least one of the points $x$ and $y$ lies in $D$. So suppose $x$ and $y$ are ends or inner points of limit edges. If $x$ and $y$ are both ends choose an $X \in \cX(D)$ that separates $x$ and~$y$. Then the admissible partition $P_X$ given by the strong components of $D-X$ and all the vertices in $X$ as singletons, is the desired partition. Similarly, if $x$ is an end and $y$ is an inner point of a limit edge of the form $\omega \eta$ for two ends of $D$, then choose an $X \in \cX(D)$ that separates all the ends in $\{ x, \omega,\eta \}$ simultaneously. Again the admissible partition given by the strong components of $D-X$ and all the vertices in $X$ as singletons is as desired. The other cases are analogue and we leave the details to the reader.

By the lifting lemma and Lemma \ref{lemma: phiHat are continuous} the $\hat{\varphi}_P$ induce a continuous injective map $\hat{\Phi} \colon |D| \to \varprojlim (D/P )_{ P \in \mathcal{P} } $.
By Lemma~\ref{lemma: image dense in inverse limit}  we have that the image of the map $\hat{\Phi}$ is dense in $\varprojlim (D/P )_{ P \in \mathcal{P} }$. Moreover, as $D$ is solid, we have that $|D|$ is compact by Theorem~\ref{thm: compact iff solid} so the image of $\hat{\Phi}$ is closed; hence it is all of $\varprojlim (D/P )_{ P \in \mathcal{P} }$.
The statement now follows from Lemma~\ref{lemma: compact into haussdorff gives hemeo}.
\end{proof}


In the  proof of Theorem~\ref{thm: |D| homeo inverse limit]} we used those admissible partitions that arise by deleting a finite vertex set from a solid digraph to ensure that the map $\Phi$ that is induced by the $\hat{\varphi}_P$ is injective. Next, we show that these admissible partitions capture the whole inverse system for a solid digraph. 

To make this formal, let $D$ be a solid digraph and $X\in\cX(D)$. We denote by $P_X$ the admissible partition where each vertex in $X$ is a singleton partition class and the other partition classes consist of the strong components of $D-X$. We claim that $\mathcal{P}_X:=\{\, P_X \mid X \in \cX(D)  \,\}$ is cofinal in the set of admissible partitions of $D$, that is for every admissible partition $P$ there is an $X$ such that $P \leq P_X$. Indeed, given $P\in\cP$ we have $P \leq P_X$ for any finite set $X\in \cX(D)$ that separates any two partition classes in $P$.

Now, $\{ D/P_X  ,  f_{P_{X} P_{X'}}, \mathcal{P}_X \}$ is an inverse system by itself and by Lemma~\ref{lemma: cofinal limit homeo} we have that $$ \varprojlim (D/P)_{P \in P}  \cong \varprojlim (D/ P_X)_{X \in \cX}.$$

\noindent If $D$ is countable one can simplify the directed system even further: Fix an enumeration $v_0,v_1,\ldots$ of the vertex set of $D$ and write $X_n$ for the set of the first $n$ vertices. Then the set of all the $P_{X_n}$ is  cofinal  in  $\mathcal{P}_X$ and therefore it is also cofinal in the set of all the admissible partitions of $D$.

\begin{cor}\label{cor: modD inverse limit of deletion minors}
Let $D$ be a countable solid digraph and let  $X_n$ consist of the first $n$ vertices of $D$ with regard to a any fixed  enumeration of $V(D)$. Then $|D|~\cong~\varprojlim(D/ P_{X_n})_{n \in \N } $.
\qed
\end{cor}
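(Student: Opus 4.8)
The plan is to obtain the corollary by combining the homeomorphism of Theorem~\ref{thm: |D| homeo inverse limit]} with the cofinality statement established in the paragraph immediately preceding it, using Lemma~\ref{lemma: cofinal limit homeo} to pass to the smaller index set $\{\, P_{X_n} \mid n \in \N \,\}$. Since $D$ is solid, Theorem~\ref{thm: |D| homeo inverse limit]} already supplies $|D| \cong \varprojlim (D/P)_{P \in \mathcal{P}}$, so the only thing left to do is identify this inverse limit with $\varprojlim (D/P_{X_n})_{n \in \N}$.

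For this I would apply Lemma~\ref{lemma: cofinal limit homeo} to the subset $\{\, P_{X_n} \mid n \in \N \,\}$ of $\mathcal{P}$, and I would check its two hypotheses in turn. The compactness hypothesis is immediate, since each $D/P_{X_n}$ is a finite (multi-)digraph carrying the $1$-complex topology and is therefore a compact Hausdorff space, as recorded when the inverse system was set up. The cofinality hypothesis has essentially been argued already: $\{\, P_{X_n} \mid n \in \N \,\}$ is cofinal in $\mathcal{P}_X = \{\, P_X \mid X \in \cX(D)\,\}$, and $\mathcal{P}_X$ is cofinal in $\mathcal{P}$, so cofinality in $\mathcal{P}$ follows by transitivity. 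Concretely, given $P \in \mathcal{P}$ one separates each of its finitely many pairs of partition classes by a finite vertex set and takes $X$ to be their union, so that $P \le P_X$; choosing $n$ with $X \subseteq X_n$ then gives $P_X \le P_{X_n}$ and hence $P \le P_{X_n}$. I would also note that $\{\, P_{X_n} \mid n \in \N \,\}$ is in fact a chain, since $X_n \subseteq X_m$ for $n \le m$ forces $P_{X_n} \le P_{X_m}$, so it is in particular directed and a legitimate cofinal subposet.

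With both hypotheses verified, Lemma~\ref{lemma: cofinal limit homeo} yields $\varprojlim (D/P)_{P \in \mathcal{P}} \cong \varprojlim (D/P_{X_n})_{n \in \N}$, and composing this with the homeomorphism of Theorem~\ref{thm: |D| homeo inverse limit]} gives the desired $|D| \cong \varprojlim (D/P_{X_n})_{n \in \N}$. I do not expect a genuine obstacle here: all the substantive content is carried by Theorem~\ref{thm: |D| homeo inverse limit]}, and the remaining work is only the bookkeeping of the two cofinalities together with the trivial compactness check demanded by Lemma~\ref{lemma: cofinal limit homeo}. If anything merits a second glance it is making sure the cofinal family is directed so that Lemma~\ref{lemma: cofinal limit homeo} genuinely applies, which the chain observation settles.
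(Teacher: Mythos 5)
Your proposal is correct and matches the paper's own (implicit) argument: the corollary is stated with \qed precisely because the preceding paragraphs already establish that $\mathcal{P}_X$ is cofinal in $\mathcal{P}$ and that $\{\, P_{X_n} \mid n \in \N \,\}$ is cofinal in $\mathcal{P}_X$ (hence in $\mathcal{P}$), after which Lemma~\ref{lemma: cofinal limit homeo} and Theorem~\ref{thm: |D| homeo inverse limit]} combine exactly as you describe. Your additional checks (compactness of the finite quotients and the chain property of the $P_{X_n}$) are the same routine verifications the paper leaves implicit.
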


\section{Applications}\label{section: Applications} 
\noindent In this last section we prove  two statements about finite digraphs that naturally generalise to the space $|D|$, but do not generalise verbatim to infinite digraphs, Theorem~\ref{thm: eulerian characterisation} and Theorem~\ref{thm: char strong digraphs}. We begin this section by introducing all the definitions needed. We then provide an important tool that describes how (topological) paths in $\vert D \vert$ can pass through cuts in $D$, the directed jumping arc lemma. Finally, we prove our two main results of this section, Theorem~\ref{thm: eulerian characterisation} and Theorem~\ref{thm: char strong digraphs}. 

A continuous function $\alpha \colon [0,1] \to |D|$ is called  a \emph{local homeomorphism on the edges of $|D|$} if for every $x \in (0,1)$ that is mapped to an inner point of an edge $e \in E \cup \Lambda$, there is a neighbourhood $(a,b)$ of $x$ such that $\alpha$ restricts on $(a,b)$ to a homeomorphism to the interior of $e$, i.e., $\alpha \restriction(a,b) \cong \mathring{e}$. Note that by the continuity of $\alpha$ any such homeomorphism $\alpha \restriction(a,b)$ extends to a homeomorphism $\alpha \restriction[a,b] \cong e$. If in addition $\alpha$ \emph{respects the orientation of the edges} in $|D|$, that is if $[a,b] \subseteq [0,1]$ is mapped to an edge $e \in E \cup \Lambda$ we have that $x \leq y$ implies $\alpha(x) \leq \alpha(y)$ for all the $x,y \in [a,b]$, then we call $\alpha$ a \emph{directed topological path} in $|D|$. (Here $\alpha(x) \leq \alpha(y)$ refers to $\leq$ in $[0,1]_e$.)

We think of directed topological  paths in $|D|$ as generalised directed walks in $D$. Here the edges of a directed walk in $D$ are directed along the walk. Indeed, every directed walk in $D$ defines via a suitable parametrisation a directed topological  path in $|D|$. If the image of $\alpha$ contains a vertex or an end $x$ we simply say that $\alpha$ contains~$x$. We say that a directed topological path $\alpha$ \emph{traverses an edge} $e \in E \cup \Lambda$ of $|D|$ if $\alpha$ restricts on a subinterval  of $[0,1]$ to a homeomorphism on $e$. 
The points  $\alpha(0)$ and $\alpha(1)$ are called the \emph{endpoints} of $\alpha$ and we say that $\alpha$ \emph{connects} $\alpha(0)$ to~$\alpha(1)$.  A directed topological  path whose endpoints coincide is \emph{closed}.

Next, let us gain some understanding of how directed topological  paths  in $|D|$ can pass through cuts of $D$, see also the jumping arc lemma \cite[Lemma~8.6.3]{DiestelBook5}.

\begin{lemma}[Directed Jumping Arc Lemma]\label{lemma: jump lemma}  
Let $D$ be any digraph and let $\{V_1,V_2  \}$ be any bipartition of $V(D)$.
\end{lemma}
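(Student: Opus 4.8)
Since the statement here only fixes the bipartition $\{V_1,V_2\}$, I read the lemma as the directed analogue of Diestel's jumping arc lemma~\cite[Lemma~8.6.3]{DiestelBook5}: writing $F$ for the cut between the two sides, I expect the two assertions to be (i) that if $F$ is finite then the closures $\overline{V_1}$ and $\overline{V_2}$ in $|D|$ are disjoint and a directed topological path can pass from the $\overline{V_1}$-side to the $\overline{V_2}$-side only by traversing an edge from $V_1$ to $V_2$ (and symmetrically), so that every closed directed topological path traverses as many edges from $V_1$ to $V_2$ as from $V_2$ to $V_1$; and (ii) that if $F$ is infinite a directed topological path may \emph{jump} between the two sides through a common end or a cut-crossing limit edge, without traversing any edge of $F$. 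The plan is to prove (i) in full and to isolate the jumping phenomenon of (ii) as the failure of disjointness of the closures.

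For (i) I would first fix the finite set $X_0$ of all endpoints of cut edges. Since $D-X_0$ has no edge between $V_1\setminus X_0$ and $V_2\setminus X_0$, every strong component of $D-X_0$ lies entirely in $V_1$ or entirely in $V_2$, and for finer finite sets the component containing a fixed end stays on that side. Hence every end lies in exactly one of $\overline{V_1},\overline{V_2}$, giving $\overline{V_1}\cap\overline{V_2}=\emptyset$. The same finiteness shows that no limit edge can have its two endpoints on opposite sides: a limit edge from the $V_1$-side to the $V_2$-side would, by Proposition~\ref{lemma: char limit edge type I} or Proposition~\ref{lemma: char limit edge type II}, force infinitely many edges from some $C(X,\cdot)\subseteq V_1$ to some $C(X,\cdot)\subseteq V_2$, contradicting finiteness of $F$. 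Alternatively this reads off Lemma~\ref{lemma: connection closure; direction; necklace; EXTENDED top. closure} applied to $\cU=\{V_1,V_2\}$: a point in both closures would produce a necklace attached to both sides, hence infinitely many cut edges.

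With ends and limit-edge interiors thereby sorted onto a single side, I would partition $|D|\setminus U$, where $U:=\bigcup_{e\in F}\mathring e$ is the open set of interiors of the finitely many cut edges, into the two sets $Y_1,Y_2$ consisting of everything on the respective side (vertices, interiors of non-cut edges, ends, and interiors of non-crossing limit edges). The work is to verify, using the explicit basic neighbourhoods $\hat C_\varepsilon(X,\omega)$ and $\hat E_{\varepsilon,z}(X,\cdot)$, that $Y_1$ and $Y_2$ are clopen in $|D|\setminus U$: their only potential points of contact lie along cut edges, whose interiors have been removed, while a neighbourhood of an end taken with $X\supseteq X_0$ meets only its own side inside $|D|\setminus U$. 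Pulling back along a directed topological path $\alpha$, the open set $\alpha^{-1}(U)$ is a union of maximal excursions into the interior of a single cut edge; because $\alpha$ respects orientation, each excursion runs from the tail to the head of that edge, flipping the side of $\alpha$ from $1$ to $2$ for an edge of $E(V_1,V_2)$ and from $2$ to $1$ for an edge of $E(V_2,V_1)$. Between excursions $\alpha$ stays on one side since $\alpha^{-1}(Y_i)$ is relatively clopen. Hence crossings alternate in type, and if $\alpha$ is closed (so $\alpha(0),\alpha(1)$ lie on the same side) the two types occur equally often; for an Euler tour, where each edge is used once, there are at most $|F|$ excursions, so both counts are finite and equal.

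The main obstacle I anticipate is exactly the topological bookkeeping of (i): showing rigorously that $Y_1,Y_2$ are clopen and that $\alpha$ cannot slip between the sides `at infinity', i.e. that every change of side is a genuine traversal of a cut edge. This is where disjointness of the closures and the absence of cut-crossing limit edges must both be invoked, and where the directed neighbourhoods of ends and of limit edges have to be inspected against $X_0$; once this is secured, the counting via connectedness of $[0,1]$ is routine. For (ii) I would, when $F$ is infinite, use Lemma~\ref{lemma: connection closure; direction; necklace; EXTENDED top. closure} to produce a common end of $\overline{V_1}\cap\overline{V_2}$ (or a limit edge with endpoints on opposite sides) and route a directed topological path through it, thereby crossing the cut while traversing no edge of $F$.
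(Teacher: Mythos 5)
You have reconstructed the wrong statement, and this matters for the proof. The paper's lemma is not conditioned on finiteness of the cut: its part (i) assumes only that $\overline{V_1}\cap\overline{V_2}=\emptyset$ and concludes that every directed topological path from $V_1$ to $V_2$ traverses an edge \emph{of $|D|$} --- possibly a limit edge --- with tail in $\overline{V_1}$ and head in $\overline{V_2}$; its part (ii) assumes $\overline{V_1}\cap\overline{V_2}\neq\emptyset$ \emph{and} that both $D[V_1]$ and $D[V_2]$ are solid, and asserts the existence of a directed topological path traversing none of the edges between the closures. Your hypotheses are genuinely different: a finite cut does imply disjoint closures (your argument for that is fine), but not conversely. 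Take two disjoint necklaces, one in $D[V_1]$ and one in $D[V_2]$, with the $n$th bead of the first sending an edge to the $n$th bead of the second (Proposition~\ref{lemma: char limit edge type I}): here the cut is infinite yet the closures are disjoint. Your proof of (i) --- removing the interiors of the \emph{finitely many} cut edges and checking that the two remaining pieces are clopen --- has no counterpart in this regime, and your stronger conclusion (every crossing traverses an actual cut edge of $D$) is false in this example, since a path can cross through the limit edge while using no edge of the cut. The paper instead argues from disjointness alone: it first shows that any edge of $|D|$ with both endpoints in $\overline{D[V_i]}$ lies entirely in $\overline{D[V_i]}$, so that $|D|\setminus(\overline{D[V_1]}\cup\overline{D[V_2]})$ consists of inner points of crossing edges; it then picks the \emph{first} interval of $[0,1]$ mapped onto a crossing edge (a strictly descending sequence of such intervals would converge to a point of $\overline{V_1}\cap\overline{V_2}$, contradicting disjointness), and orientation-respect pins down the direction of that edge.

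Your part (ii) has two concrete gaps relative to the paper's. First, ``the cut is infinite'' does not yield a point of $\overline{V_1}\cap\overline{V_2}$: the necklace-pair example above has an infinite cut and disjoint closures, so there is no common end to jump through; this is exactly why the paper conditions on the closures rather than on the cut. Second, even granted a common end $\omega$, ``route a directed topological path through it'' is the substantive step, and it is where the solidity hypothesis you dropped is indispensable: one needs a ray inside $D[V_1]$ converging to $\omega$ and a reverse ray inside $D[V_2]$ converging to $\omega$. The paper obtains these by taking a necklace attached to $\{V_1,V_2\}$ (Lemma~\ref{lemma: connection closure; direction; necklace; EXTENDED top. closure}), then applying Corollary~\ref{cor: listofeuiv.solid} to the solid digraphs $D[V_1]$ and $D[V_2]$ to extract necklaces $N_1\subseteq D[V_1]$ and $N_2\subseteq D[V_2]$, both representing $\omega$; a ray in $N_1$ followed by a reverse ray in $N_2$ is then the desired path. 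Note finally that your alternative of ``jumping through a cut-crossing limit edge'' would not even satisfy the paper's conclusion: a limit edge between the two closures is precisely one of the edges that the path in (ii) must avoid.
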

\begin{enumerate}
    \item If $\overline{V_1} \cap \overline{V_2} = \emptyset$, then  every directed topological  path in $|D|$ from $V_1$ to $V_2$ traverses an edge of $|D|$ with tail in $\overline{V_1}$ and head in $\overline{V_2}$.  
   \item If $\overline{V_1} \cap \overline{V_2} \neq \emptyset$, there will be a directed topological  path in $|D|$ from $V_1$ to $V_2$ that traverses none of the edges between  $\overline{V_1}$ and $\overline{V_2}$ if both $D[V_1]$ and $D[V_2]$ are solid. 
\end{enumerate}
\begin{proof}
(i)  Suppose that $\overline{V_1}\cap \overline{V_2}$ is empty. Then every  end of $D$ is either contained in $\overline{V_1}$ or $\overline{V_2}$. First, we show that every edge of $|D|$ that has both of its endpoints in $\overline{D[V_i]}$ is contained in $\overline{D[V_i]}$, for $i=1,2$. For edges of $D$ this is trivial. So consider a limit edge $\lambda$ with both endpoints in $\overline{D[V_i]}$. All but finitely many vertices of a subdigraph obtained by Proposition~\ref{lemma: char limit edge type I} or Proposition~\ref{lemma: char limit edge type II} applied to $\lambda$ are contained in $D[V_i]$, otherwise this gives an end in $\overline{V_1}\cap \overline{{V_2}}$. Consequently, for every inner point $z \in \lambda$ there is a sequence of inner points of edges in $\overline{D[V_i]}$ that converge to $z$, giving $z \in \overline{D[V_i]}$.

From this first observation, we now know that $|D| \setminus (\overline{D[V_1]}  \cup \overline{D[V_2]})$ consists only of inner points of edges (possibly limit edges) between $\overline{V_1}$ and $\overline{V_2}$. Now, consider a directed topological  path $\alpha$ that connects a point in $V_1$ to a point in~$V_2$.
As $[0,1]$ is connected and $\alpha$ is continuous there is a point $x\in [0,1]$ with $\alpha(x)\in |D|\setminus(\overline{D[V_1]} \cup \overline{D[V_2]})$. Hence the preimage of $|D|\setminus(\overline{D[V_1]} \cup \overline{D[V_2]})$ is non-empty and a union of pairwise disjoint intervals $(a,b)$ each of which is mapped homeomorphically to an open edge between $\overline{V_1}$ and $\overline{V_2}$. The usual relation $\le$ on the reals defines a linear order on these intervals. Among these intervals, choose $(a,b)$ minimal. That this is possible can be seen as follows: If not, we find a strictly decreasing sequence $(a_0,b_0)  \geq (a_1,b_1) \geq \dots$ of intervals with $\alpha \restriction[a_i,b_i] \cong e_i$ for some edges $e_i$ between $\overline{V_1}$ and $\overline{V_2}$. Then $(a_i)_{i\in\N}$ and $(b_i)_{i\in\N}$ converge to some $c\in[0,1]$ and using that $\alpha$ is continuous, we get $\alpha(c)\in \overline{V_1}\cap \overline{V_2}$, a contradiction.

We claim that the image of $(a,b)$ under $\alpha$ is an edge from $\overline{V_1}$ to $\overline{V_2}$. To see this, it suffices to show that $\alpha(a) \in \overline{V_1}$. So suppose for a contradiction that $\alpha(a) \in \overline{V_2}$. Then $\alpha \restriction[0,a]$ gives a directed topological  path from $\overline{V_1}$ to $\overline{V_2}$. By a similar argument as above there is a point in $[0,a]$ mapped to an edge between $\overline{V_1}$ and $\overline{V_2}$, contradicting the choice of $(a,b)$.

(ii) First note that no inner point of a limit edge is a limit point of a set of vertices. Hence $D$ has at least one  end that is contained in both the closure of $V_1$ and $V_2$. By Lemma~\ref{lemma: connection closure; direction; necklace; EXTENDED top. closure} we find a necklace $N\subseteq D$ attached to $\{V_1,V_2 \}$. Let $\omega$ be the end that is represented by  $N$.  Apply Corollary~\ref{cor: listofeuiv.solid} to the solid digraph $D[V_1]$ and the infinite set $U_1:= V_1\cap V( N)$ in order to obtain a necklace $N_1$ attached to~$U_1$. Let $U_2$ consist of all the vertices in $V_2$ that are contained in those beads of $N$ that intersect a bead of $N_1$. Apply Corollary~\ref{cor: listofeuiv.solid} to the solid digraph $D[V_2]$ and the infinite set $U_2$ in order to obtain a necklace $N_2$ attached to~$U_2$. Note that both necklaces $N_1$ and $N_2$ represent $\omega$. A ray in $N_1$ together with a reverse ray in $N_2$  defines a directed topological  path that is as desired.
\end{proof}

Now, let us turn to our applications. A finite digraph is called \emph{Eulerian} if there is a closed directed walk that contains every edge exactly once.  A \emph{cut} of a digraph $D$ is an ordered pair $(V_1,V_2)$ of non-empty sets $V_1,V_2 \subseteq V(D)$ such that $V_1 \cup V_ 2 = V(D)$ and $V_1 \cap V_2 = \emptyset$. The sets $V_1$ and $V_2$ are the \emph{sides} of the cut, and its \emph{size} is the cardinality of the set of edges from $V_1$ to $V_2$.  We call a cut $(V_1,V_2)$ \emph{balanced} if its size equals that of $(V_2,V_1)$.  An \emph{unbalanced cut} is a cut that is not balanced. It is  well known that a finite digraph (with a connected underlying graph) is Eulerian if and only if all of its cuts are balanced. 

A closed directed topological path $\alpha$ that traverses every edge of $|D|$ exactly once is called \emph{Euler tour}, i.e., for every edge $e$ of $|D|$ there is exactly one subinterval of $[0,1]$ that is mapped homeomorphiccally to $e$ via $\alpha$. If $|D|$ has an Euler tour we call $|D|$ \emph{Eulerian}. There are two obstructions for digraph $D$ to be Eulerian: one is a vertex of infinite degree and the other one is an unbalanced cut. A digraph is \emph{locally finite} if all its vertices have finite in- and out-degree. Theorem~\ref{thm: eulerian characterisation} states that there are no further obstructions. We need one more lemma for its proof:

\begin{lemma}\label{lemma: loc. fin. and balanced gives solid}
Let $D$ be a digraph with a connected underlying graph. If $D$ is locally finite and every finite cut of $D$ is balanced, then $D$ is solid.
\end{lemma}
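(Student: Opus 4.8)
The plan is to prove the implication directly in two clean stages: first deduce from the cut hypothesis that $D$ is \emph{strongly connected}, and then upgrade strong connectivity together with local finiteness to solidity.

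For the first stage, the key observation is that a vertex set closed under out-edges yields a finite cut of size zero. Concretely, for a vertex $v$ let $R(v)$ be the set of all vertices reachable from $v$ in $D$. No edge leaves $R(v)$: if $a\in R(v)$ and $a\to b$ is an edge, then $b\in R(v)$. Hence the cut $(R(v),V(D)\setminus R(v))$ has size $0$, so it is a finite cut, and by hypothesis it is balanced; thus no edge enters $R(v)$ either. If $R(v)$ were a proper non-empty subset of $V(D)$, then, the underlying graph of $D$ being connected, some edge of $D$ would join $R(v)$ to its complement --- necessarily entering $R(v)$, a contradiction. Therefore $R(v)=V(D)$ for every $v$, i.e.\ $D$ is strongly connected. (This is exactly the point where reading ``finite cut'' as \emph{finite forward size} matters: the cut $(R(v),V(D)\setminus R(v))$ may have infinitely many edges in the backward direction.)

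For the second stage I would show, using Corollary~\ref{cor: listofeuiv.solid}, that every infinite vertex set $U$ admits a necklace attached to it; this is equivalent to solidity. Since $D$ is now strongly connected, the Directed Star-Comb Lemma applies to $U$ and produces a star or comb attached to $U$ together with a reverse star or reverse comb attached to $U$ sharing their attachment sets. Local finiteness rules out the stars: a subdivided infinite star would require infinitely many internally disjoint paths emanating from its centre, impossible at a vertex of finite out- (resp.\ in-) degree. Hence one obtains a comb and a reverse comb sharing an infinite attachment set $W\subseteq U$. For each $w\in W$ the comb provides a finite directed path from its spine to $w$ and the reverse comb a finite directed path from $w$ to its spine; weaving these with the two spines yields two-way connections between consecutive teeth, which I would assemble into an inflated symmetric ray with finite beads --- a necklace --- whose beads meet $W$, and hence $U$, infinitely often. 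By Corollary~\ref{cor: listofeuiv.solid} (see also Lemma~\ref{lemma: connection closure; direction; necklace; EXTENDED top. closure}), the existence of such a necklace for every infinite $U$ gives that $D$ is solid.

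The first stage is short and essentially formal. The main obstacle is the necklace construction in the second stage: the comb and the reverse comb have different spines and a priori interleave badly, so the work lies in passing to a common cofinal subsequence of the shared attachment set $W$ and cutting the two combs into finite, pairwise disjoint beads that are mutually reachable in both directions. I expect this to be the only genuinely technical step; once the beads are organised as an inflated symmetric ray, the conclusion follows from the stated equivalences.
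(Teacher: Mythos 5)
Your first stage is correct: the reachability set $R(v)$ yields a cut of size $0$, balancedness forces the reverse size to be $0$ as well, and connectedness of the underlying graph then gives $R(v)=V(D)$. (This is in fact the same reachability-closure argument the paper uses, though it applies it to a contraction minor $D'$ rather than to $D$ itself.) The gap is in your second stage: the claim that strong connectivity together with local finiteness implies solidity is \emph{false}, so no amount of care in the necklace construction can save it. Concretely, let $D$ have vertices $r_0,r_1,\dots$ and $r'_0,r'_1,\dots$, with edges $r_ir_{i+1}$ (a forward ray $R$), $r'_{i+1}r'_i$ (a reverse ray $R'$), rungs $r_ir'_i$ for all $i$, and the single return edge $r'_0r_0$. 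This digraph is strongly connected and locally finite, yet deleting $X=\{r_0,r'_0\}$ leaves every remaining vertex as its own strong component, since every directed cycle of $D$ must use the edge $r'_0r_0$; so $D$ is not solid. Worse for your plan: $D$ does contain a comb (spine $R$, trivial teeth $r_i$) and a reverse comb (spine $R'$, tooth-to-spine paths $r_ir'_i$) sharing the infinite attachment set $\{\,r_i \mid i\in\mathbb{N}\,\}$, and yet $D$ contains no necklace at all --- consecutive beads of a necklace span internally disjoint cycles, whereas all cycles of $D$ pass through the one edge $r'_0r_0$. So the step you flagged as ``the only genuinely technical step'' is not technical but impossible: a comb and a reverse comb sharing their teeth cannot in general be woven into a necklace, because nothing lets you return from the reverse spine to the forward spine except, possibly, through a single bottleneck.

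The reason this example does not contradict the lemma is that it violates the hypothesis you discarded after stage one: the cut $\bigl(V(R'),V(R)\bigr)$ has size $1$ while $\bigl(V(R),V(R')\bigr)$ has infinite size, an unbalanced finite cut. That is precisely the point --- balancedness of finite cuts must be used \emph{again} after strong connectivity is established. The paper does this as follows: assuming $D$ is not solid, it contracts the strong components of $D-X$ to obtain a locally finite (multi-)digraph $D'$ all of whose finite cuts are still balanced, shows $D'$ is strongly connected (your stage-one argument), applies the directed star-comb lemma to get a comb and reverse comb with spines $R$ and $R'$ avoiding $X$, and then, using that the condensation $D'-X$ is acyclic, takes $V_1$ to be the set of vertices reachable from $R'$ in $D'-X$: this set receives infinitely many edges (one for each of the pairwise disjoint spine--tooth--spine paths) but sends out only finitely many (all into $X$, by local finiteness), producing an unbalanced finite cut and hence a contradiction. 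If you want to keep your two-stage outline, stage two has to be reworked along these lines, carrying the balanced-cut hypothesis with you rather than dropping it.
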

\begin{proof}
Suppose for a contradiction that $D$ is not solid and fix a finite vertex set $X \subseteq V(D)$ such that $D-X$ has infinitely many strong components. Our goal is to find a finite unbalanced cut of $D$. We may view the strong components of $D-X$ partially ordered by $C_1 \leq C_2$ if there is a path in $D-X$ from $C_1$ to $C_2$.  We first note that any strong component $C$ of $D-X$ receives and sends out only finitely many edges in $D-X$. Indeed, if $C$ sends out infinitely many edges, then $(V_1,V_2)$ is a finite unbalanced cut, where $V_1$ is the union of all the strong components  strictly greater than $C$ and where $V_2:= V(D)\setminus V_1$. A similar argument shows that $C$ receives only finitely many edges. Now, the (multi-)digraph $D'$ obtained from $D$ by contracting all the strong components of $D-X$ is locally finite. Note that also every finite cut of $D'$ is balanced.

Now, $D'$ is also strongly connected. Indeed, if there is a vertex $v\in V(D')$ that cannot reach all the other vertices, then  $(V_1,V_2)$ is a finite unbalanced cut of $D'$, where $V_1$ is the set of vertices in $V(D')$ that can be reached from $v$ and where $V_2:= V(D')\setminus V_1$ (here we use that the graph underlying $D$ is connected). Hence we may apply the directed star-comb lemma in $D'$ to~$V(D')$. As $D'$ is locally finite, the return is a comb and a reverse comb sharing their attachment sets; we may assume that both avoid~$X$. Let $R$ be the spine of the comb and $R'$ the spine of the reverse comb. 
Let $V_1$ be the set of all the vertices in $D'-X$ that can be reached from $R'$ in $D'-X$ and $V_2:= V(D') \setminus V_1$.

As $D'-X$ is acyclic we have that the vertex set of $R$ is included in $V_2$. But then $(V_1, V_2)$ is a finite unbalanced cut, which in turn gives rise to a finite unbalanced cut of $D$. 
\end{proof}

\begin{mainresult}\label{thm: eulerian characterisation}
For a digraph $D$ with a connected underlying graph the following assertions are~equivalent:
\begin{enumerate}
    \item $|D|$ is Eulerian;
    \item $D$ is locally finite and every finite cut of $D$ is balanced.
\end{enumerate}
\end{mainresult}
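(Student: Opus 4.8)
The plan is to prove the two implications separately, treating (i)$\Rightarrow$(ii) as the short obstruction-based direction and (ii)$\Rightarrow$(i) as the substantial construction.

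For (i)$\Rightarrow$(ii), let $\alpha$ be an Euler tour of $|D|$. I would first rule out a vertex $v$ of infinite out-degree (in-degree being symmetric). If $vw_1,vw_2,\dots$ are infinitely many out-edges, then $\alpha$ traverses each $vw_i$ on a subinterval $[c_i,d_i]$ with $\alpha(c_i)=v$, and since distinct edges have disjoint interiors the open intervals $(c_i,d_i)$ are pairwise disjoint in $[0,1]$; hence their lengths tend to $0$. Along a subsequence with $c_i\to c^\ast$ we then also get $d_i\to c^\ast$, so the parameters of the edge-midpoints tend to $c^\ast$ as well, and continuity forces both $\alpha(c^\ast)=v$ and $\alpha(\text{midpoint of }vw_i)\to v$ --- impossible, since no midpoint of an edge at $v$ lies in the uniform star of radius $\tfrac12$ around $v$. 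For balancedness, let $(V_1,V_2)$ be a finite cut. An end in $\overline{V_1}\cap\overline{V_2}$ would, by Lemma~\ref{lemma: connection closure; direction; necklace; EXTENDED top. closure}, yield a necklace attached to $\{V_1,V_2\}$ whose beads meet both sides infinitely often, forcing infinitely many cut edges; so $\overline{V_1}\cap\overline{V_2}=\emptyset$. Then the directed jumping arc lemma (Lemma~\ref{lemma: jump lemma}(i)) applies: every passage of $\alpha$ between the two sides traverses a cut edge, and as $\alpha$ is closed and uses every cut edge exactly once, the numbers of $V_1$--$V_2$ and $V_2$--$V_1$ crossings agree, i.e.\ the cut is balanced.

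For (ii)$\Rightarrow$(i) I would first collect the structural consequences. By Lemma~\ref{lemma: loc. fin. and balanced gives solid} the digraph $D$ is solid, so by Theorem~\ref{thm: compact iff solid} the space $|D|$ is compact and, by Theorem~\ref{thm: |D| homeo inverse limit]}, the inverse limit of its finite contraction minors. Applying balancedness to the finite cut $(\{x\},V(D)\setminus\{x\})$ shows that in-degree equals out-degree at every vertex $x$. Since the underlying graph is connected and locally finite, $D$ is countable, so $E(D)$ is countable; moreover, as $D$ is solid the condensation of each $D-X$ is a finite acyclic digraph, so every ray of $D$ eventually stays in one strong component and hence is solid and converges to an end. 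Because in-degree equals out-degree everywhere, I would fix a transition system --- a bijection at each vertex between its entering and leaving edges --- which decomposes $E(D)$ into edge-disjoint directed closed walks and directed double rays; crucially, deleting any already-used finite closed walk preserves the in/out balance. I would then assemble these pieces into one closed topological path by an exhaustion: keep a finite partial tour, splice in at shared vertices the finite circuits containing the next not-yet-used edge, and take the limit in the compact space $|D|$, controlling continuity at ends through the neighbourhood basis $\hat{C}_{\varepsilon}(X,\omega)$.

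The main obstacle is the behaviour at infinity, and I expect it to be where most of the work lies. Finite splicing cannot absorb a double ray, whose two tails converge to ends of $D$; these tails must be linked correctly --- either to each other at a common end, closing into a topological circle, or across a limit edge --- and every limit edge must itself be traversed exactly once. The delicate points I anticipate are: showing the nested partial tours converge uniformly to a genuine continuous map that is a local homeomorphism respecting orientation on each edge; verifying that every edge, \emph{including every limit edge}, is covered exactly once (which in particular re-proves that (ii) forces only countably many limit edges, as an Euler tour can traverse only countably many edges); and using balancedness and solidity together with Proposition~\ref{lemma: char limit edge type I} and Proposition~\ref{lemma: char limit edge type II} to guarantee that the double rays and limit edges fit together into a \emph{single} closed tour rather than a disjoint family of circles.
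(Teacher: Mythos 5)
Your direction (i)$\Rightarrow$(ii) is correct and is essentially the paper's own argument (the paper is even terser about balancedness, and your midpoint argument for local finiteness is a fine variant of its endpoint argument). The genuine gap is in (ii)$\Rightarrow$(i), where what you submit is a plan whose hardest steps are explicitly deferred rather than carried out. The transition-system decomposition of $E(D)$ into edge-disjoint closed trails and bi-infinite trails is the easy part; what is hard is precisely what you list as ``delicate points I anticipate'': that the nested spliced partial tours converge to a continuous map, that this map covers every edge exactly once, that the bi-infinite pieces whose tails run off to ends can be absorbed at all, and that the outcome is a \emph{single} closed tour rather than a family of disjoint circles. None of these is a routine verification---for undirected locally finite graphs this is the substance of the Diestel--K\"uhn theorem---and your proposal gives no argument for any of them. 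A proof cannot stop where the difficulty starts.

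You have also missed the structural fact that the paper establishes at the outset of this direction and that would have reshaped your plan: under (ii) the space $|D|$ has \emph{no limit edges at all}. Local finiteness excludes limit edges between a vertex and an end by Proposition~\ref{lemma: char limit edge type II}, and a limit edge $\omega\eta$ between two ends yields a finite unbalanced cut: fix a finite $X$ separating $\omega$ and $\eta$, let $V_1$ be the union of the strong components of $D-X$ strictly below $C(X,\eta)$ in the reachability order and $V_2:=V(D)\setminus V_1$; then infinitely many edges go from $V_1$ to $V_2$ (forced by the limit edge), while every edge from $V_2$ to $V_1$ leaves the finite set $X$, so there are only finitely many by local finiteness. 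Hence your worries about traversing and linking limit edges are vacuous, while the real work remains untouched. The paper's route avoids your construction entirely: by Lemma~\ref{lemma: loc. fin. and balanced gives solid} the digraph $D$ is solid, hence countable here, so $|D|\cong\varprojlim (D/P_{X_n})_{n\in\N}$ by Theorem~\ref{thm: |D| homeo inverse limit]} and Corollary~\ref{cor: modD inverse limit of deletion minors}; each finite quotient $D/P_{X_n}$ has no quotient edges and only balanced cuts, so it is Eulerian by the finite theorem; K\H{o}nig's infinity lemma selects a compatible family of Euler tours of these quotients, and compatible parametrisations of them induce, via the universal property of the inverse limit, an Euler tour of $|D|$. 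If you insist on your splicing approach you must supply the convergence and connectivity arguments yourself; the inverse-limit machinery of Sections~\ref{section: a topology for digraphs} and~\ref{section: inverse limit} is exactly what makes this direction short.
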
 
\begin{proof}
For the forward implication (i)$\to$(ii) suppose that $D$ has an Euler tour $\alpha$. Using the directed jumping arc lemma it is straightforward to show that $D$ has only balanced cuts.  Let us show that $D$ needs to be locally finite for $\alpha$ to be continuous. Suppose for a contradiction there is a $v \in V(D)$ with infinitely many edges $e_0,e_1,\ldots$ with head $v$; the case where $v$ is the tail of infinitely many edges is analogue.  Let $(a_i,b_i) \subseteq [0,1]$ the subinterval that is mapped homeomorphic by $\alpha$ to $e_i$. As the unit interval is compact, the sequence of the $a_i$ has a convergent subsequence $(a_{i_n})_{n \in \N}$ and we write $x$ for the limit point of this subsequence. Now, the subsequence  $(b_{i_n})_{n \in \N}$ of the $b_i$ forms a convergent subsequence, too, with limit point $x$. As $\alpha(b_{i_n}) = v$ for all the $n \in \N$ we have $\alpha(x)=v$, by the continuity of $\alpha$; but $\alpha(a_{i_n})$ is a sequence of neighbours of $v$ which does not converge in $\vert D \vert$ to $v$, a contradiction.

For the backward implication (ii)$\rightarrow$(i) let us first show that $|D|$ contains no limit edges. As $D$ is locally finite there is no limit edge between a vertex and an end. So suppose for a contradiction there is a limit edge $\omega \eta$ between two  ends of $D$. Fix a finite vertex set $X$ that separates $\omega$ and $\eta$. We may view the strong components of $D - X$ partially ordered by $C_1 \leq C_2$ if there is a path in $D-X$ from $C_1$ to~$C_2$. Let $V_1$ consist of all the vertices in strong components of $D-X$ that are strictly smaller than $C(X,\eta)$ and let $V_2:= V(D)\setminus V_1$. Then $(V_1,V_2)$ is an unbalanced cut: On the one hand there are infinitely many edges from $V_1$ to $V_2$  because there are infinitely many from $C(X,\omega)$ to $C(X,\eta)$. On the other hand, there are only finitely many edges from $V_2$ to $V_1$   by our assumption that $D$ is locally finite.

Let us now find an Euler tour for $|D|$. By Lemma~\ref{lemma: loc. fin. and balanced gives solid} the digraph $D$ is solid. As it is locally finite and its underlying graph is connected $V(D)$ is countable. Choose an enumeration of $V(D)$ and let $X_n$ denote the set of the first $n$ vertices. Then $D/P_{X_n}$ contains no quotient edge and every cut of $D/P_{X_n}$ is balanced. As the statement of Theorem~\ref{thm: eulerian characterisation} holds for finite digraphs, we have that $D/P_{X_n}$ is Eulerian. Moreover, as $D/P_{X_n}$ is a finite digraph there are only finitely many (combinatorial) Euler tours of $D/P_{X_n}$. By K\H{o}nig's infinity lemma there is a consistent choice of one Euler tour for every $D/P_{X_n}$. 
Now, take a parametrisation $\alpha_n \colon [0,1] \to  D / P_{X_n} $ of the Euler tour chosen for $D / P_{X_n}$ such that the $\alpha_n$ are compatible. 
Using Theorem~\ref{thm: |D| homeo inverse limit]} it is straightforward to check that the universal property of the inverse limit gives an Euler tour for~$|D|$. 
\end{proof}

It is  well know that a finite digraph is strongly connected if and only if it has a directed closed walk that contains all its vertices. Clearly, the statement does not generalise verbatim to infinite digraphs nor does a spanning directed (double)--ray ensure the digraph to be strongly connected. Moreover, the statement does not hold if one adds the ends of the underling undirected graph:

\begin{figure}[h]
\centering
\def\svgwidth{0.8\columnwidth}
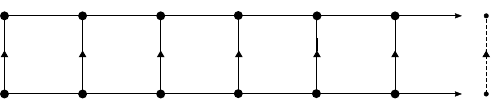
\caption{A solid digraph (every undirected edge in the figure stands for two directed edges in opposite directions) that is not strongly connected. Adding the one end of the underling undirected graph makes it possible to find a closed directed  topological path that contains all the vertices.}
\label{fig:ladder}
\end{figure} 

Adding the ends and limit edges of the digraph turns out to be the right setting for the statement to generalise:

\begin{mainresult}\label{thm: char strong digraphs}
For a countable solid digraph $D$ the following assertions are equivalent:
\begin{enumerate}
    \item $D$ is strongly connected;
    \item there is a closed topological path in $|D|$ that contains all the vertices of $D$.
\end{enumerate}
\end{mainresult}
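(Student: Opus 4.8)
The plan is to prove the two implications separately. For (ii)$\to$(i) I would argue by contraposition using the directed jumping arc lemma (Lemma~\ref{lemma: jump lemma}). Suppose $D$ is not strongly connected, so some vertex $u$ cannot reach some vertex $v$. Put $V_1 := \{\, w \mid u \text{ reaches } w \,\}$ and $V_2 := V(D) \setminus V_1$; then $u \in V_1$, $v \in V_2$, and crucially there is no edge of $D$ from $V_1$ to $V_2$. The first thing to check is that this one-sidedness forces $\overline{V_1} \cap \overline{V_2} = \emptyset$: since no edge runs from $V_1$ to $V_2$, no strong component of any $D-X$ can contain vertices of both sides (a path inside the component would have to cross the cut), so each $C(X,\omega)$ lies entirely in one side and no end lies in both closures. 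With $\overline{V_1} \cap \overline{V_2} = \emptyset$ in hand, any closed topological path meeting both $u$ and $v$ contains a directed subpath from $V_1$ to $V_2$ (wrapping around $\alpha(0)=\alpha(1)$ if necessary), so by Lemma~\ref{lemma: jump lemma}(i) it traverses an edge of $|D|$ with tail in $\overline{V_1}$ and head in $\overline{V_2}$. Unwinding the definition of such an edge — directly for edges of $D$, and via Propositions~\ref{lemma: char limit edge type I} and~\ref{lemma: char limit edge type II} for limit edges, using that the relevant $C(X,\cdot)$ sit inside the respective sides — this produces an edge of $D$ from $V_1$ to $V_2$, a contradiction.

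For (i)$\to$(ii) I would build the path as a point of the inverse limit. Since $D$ is countable and solid, $|D| \cong \varprojlim (D/P_{X_n})_{n \in \N}$ by Corollary~\ref{cor: modD inverse limit of deletion minors}, where $X_n$ consists of the first $n$ vertices of a fixed enumeration. Each finite contraction minor $D/P_{X_n}$ is strongly connected (a contraction of a strongly connected digraph), hence admits a closed directed walk visiting every vertex. The key step is to choose such spanning closed walks $w_n$ compatibly, i.e.\ so that $w_n$ is the projection of $w_{n+1}$ under the bonding map. Here I would exploit that passing from $P_{X_n}$ to $P_{X_{n+1}}$ only splits the single class $C$ containing $v_n$ into $v_n$ and the strong components of $C-v_n$, and that the local digraph on these new pieces is again strongly connected. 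Given $w_n$, I can then refine it to $w_{n+1}$ by replacing each pass of $w_n$ through $C$ by a walk through the new pieces, arranging that these passes between them cover $v_n$ and all pieces while respecting the entry and exit edges of each pass; this is possible because there is at least one pass and the local structure is strongly connected. Running this refinement recursively produces the compatible family $(w_n)$ directly.

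The main obstacle — and the place where this argument genuinely differs from the Euler tour construction of Theorem~\ref{thm: eulerian characterisation} — is precisely this compatible choice. There one may invoke K\H{o}nig's infinity lemma because a finite digraph has only finitely many Euler tours; here spanning closed walks have unbounded length, so there are infinitely many at each level and K\H{o}nig does not apply. The flexibility of spanning walks (as opposed to the rigidity of Euler tours) is exactly what lets me extend \emph{any} $w_n$ to some $w_{n+1}$, so a recursive construction replaces the compactness argument. Once the $(w_n)$ are fixed I would choose compatible parametrisations $\alpha_n \colon [0,1] \to D/P_{X_n}$ (pausing $\alpha_n$ at $C$ exactly while $\alpha_{n+1}$ traverses the inserted sub-walk), so that the universal property of the inverse limit yields a closed directed topological path $\alpha \colon [0,1] \to |D|$. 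Finally I would check that $\alpha$ contains every vertex $v=v_k$: for $n>k$ the vertex $v$ is a singleton class and $f_{n+1,n}^{-1}(v) = \{v\}$, so the nonempty closed sets $\alpha_n^{-1}(v)$ coincide for all $n>k$; any time $t$ in their common value satisfies $\alpha_n(t)=v$ for all $n$, whence $\alpha(t)=v$ in $\varprojlim (D/P_{X_n})_{n \in \N} = |D|$.
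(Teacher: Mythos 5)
Your proposal is correct and follows essentially the same route as the paper: the backward implication via contraposition with the reachability cut $(V_1,V_2)$, the observation that $\overline{V_1}\cap\overline{V_2}=\emptyset$, and the directed jumping arc lemma; the forward implication via Corollary~\ref{cor: modD inverse limit of deletion minors}, recursively lifting a spanning closed walk of $D/P_{X_n}$ to $D/P_{X_{n+1}}$ by inserting spanning walks of $C/P_{\{v_n\}}$ at each pass through the split class $C$, and then invoking compatible parametrisations and the universal property of the inverse limit. Your remark on why K\H{o}nig's lemma is replaced by a direct recursion, and your explicit check that every vertex lies in the image, are accurate refinements of details the paper leaves implicit.
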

\begin{proof}
For the forward implication (i)$\to$(ii) fix an  enumeration $v_1,v_2,\ldots$ of $V(D)$ and denote by $X_n$ the set of the first $n$ vertices. We will recursively define a sequence of walks $W_1, W_2 , \ldots$ such that $W_n$ is a directed closed walk of $D / P_{X_n}$ that contains all the vertices of $D / P_{X_n}$ and such that the projection of $W_n$ to $D / P_{X_{n-1}}$ is exactly~$W_{n-1}$. 

Once the $W_n$ are defined, it is not hard to find parametrisations $\alpha_n$ of each $W_n$ such that $f_{X_n X_{n-1}} \circ \alpha_n = \alpha_{n-1}$. Then the universal property of the inverse limit together with Corollary~\ref{cor: modD inverse limit of deletion minors} and Theorem~\ref{thm: |D| homeo inverse limit]} gives the desired closed directed topological path in~$|D|$. 

To begin, let $W_1$ be an arbitrary closed walk in $D/ P_{X_1}$ that contains all its vertices. Now suppose that $n>1$ and that $W_{n-1}$ has already been defined. Let $C$ be the strong component of $D-X_{n-1}$ that contains $v_n$. Note that the strong components of $D-X_n$ are exactly the strong components  of $D-X_{n-1}$ that are distinct from $C$ together with all the strong components of $C - v_n$. As $C$ is strongly connected the digraph $C /P_{ \{v_n\}}$ is strongly connected, as well. We now extend $W_{n-1}$ to $W_n$ by plugging in a directed walk that contains all the vertices of $C/P_{\{v_n\}}$ each time $W_{n-1}$ meets $C$. Formally, we fix for every edge $e_i$ of $W_{n-1}$ with one of its endvertices in $C$ an edge $f_i$ in $D / P_{X_n}$ that is mapped to $e_i$ by $f_{X_n X_{n-1}}$. For every occurrence of $C$ in $W_{n-1}$ there are consecutive edges $e_i$ and $e_{i+1}$ in $W_{n-1}$ such that $C$ is the head of $e_i$ and the tail of $e_{i+1}$. Now, fix a directed walk $Q_i$ in $C/P_{\{v_n\}}$  from the head of $f_i$ to the tail of $f_{i+1}$ that contains all the vertices of $C / P_{\{ v_n\}}$. We define $W_n$ by replacing any such consecutive edges $e_i$ and $e_{i+1}$ in $W_{n-1}$ by $f_i Q_i f_{i+1}$.

We prove the implication (ii)$\to$(i) via contraposition. Suppose that $D$ is not strongly connected. Then there are vertices $v,w\in V(D)$ so that there is no path from $v$ to $w$. Let $V_1$ consist of all the vertices that can be reached from $v$ and let $V_2:= V(D)\setminus V_1$. As $w \not\in V_1$, we have $V_2 \neq \emptyset$. Moreover, the edges between $V_1$ and $V_2$ form a cut with no edge from $V_1$ to $V_2$ (in particular no limit edge). Hence the intersection $\overline{V_1} \cap \overline{V_2}$ is empty. By the directed jumping arc lemma there is no directed topological  path in $|D|$ from $v$ to $w$. We conclude that there is no closed directed topological path in $|D|$ that contains all the vertices of~$D$.
\end{proof}

\bibliographystyle{amsplain}
\bibliography{bibliography.bib}

\providecommand{\bysame}{\leavevmode\hbox to3em{\hrulefill}\thinspace}
\providecommand{\MR}{\relax\ifhmode\unskip\space\fi MR }
\providecommand{\MRhref}[2]{%
  \href{http://www.ams.org/mathscinet-getitem?mr=#1}{#2}
}
\providecommand{\href}[2]{#2}
\begin{thebibliography}{10}

\bibitem{bang2008digraphs}
J.~Bang-Jensen and G.~Gutin, \emph{Digraphs: theory, algorithms and
  applications}, Springer Science \& Business Media, 2008.

\bibitem{duality}
H.~Bruhn and R.~Diestel, \emph{Duality in infinite graphs}, Comb.,\ Probab. \&
  Comput. \textbf{15} (2006), 75--90.

\bibitem{EndsOfDigraphsI}
C.~Bürger and R.~Melcher, \emph{{Ends of digraphs I: Basic theory}}, 2020,
  \href{https://www.math.uni-hamburg.de/home/buerger/Ends_of_digraphs1.html}{available
  at arXiv:2004.00591}.

\bibitem{EndsOfDigraphsIII}
\bysame, \emph{{Ends of digraphs III: Normal arborescences}}, 2020,
  \href{https://www.math.uni-hamburg.de/home/buerger/Ends_of_digraphs3.html}{available
  at arXiv:2004.00591}.

\bibitem{DiestelBook3}
R.~Diestel, \emph{{Graph Theory}}, 3th ed., Springer, 2005.

\bibitem{DiestelBook5}
\bysame, \emph{{Graph Theory}}, 5th ed., Springer, 2016.

\bibitem{VTopComp}
\bysame, \emph{End spaces and spanning trees}, J. Combin.\ Theory (Series B)
  \textbf{96}
  (\href{http://www.math.uni-hamburg.de/home/diestel/papers/EndSpacesAndNSTs.pdf}{2006}),
  846--854.

\bibitem{CyclesI}
R.~Diestel and D.~K\"uhn, \emph{{On Infinite Cycles {I}}}, Combinatorica
  \textbf{24} (2004), 68--89.

\bibitem{diestel2011locallyI}
Reinhard Diestel, \emph{Locally finite graphs with ends: A topological
  approach, i. basic theory}, Discrete Mathematics \textbf{311} (2011), no.~15,
  1423--1447.

\bibitem{EngelkingBook}
R.~Engelking, \emph{{General Topology}}, second ed., Sigma Series in Pure
  Mathematics, vol.~6, Heldermann Verlag, Berlin, 1989.

\bibitem{AgelosFleischner}
A.~Georgakopoulos, \emph{Infinite hamilton cycles in squares of locally finite
  graphs}, Advances in Mathematics \textbf{220} (2009), no.~3, 670--705.

\bibitem{RibesZalesskii}
P.~Zalesskii L.~Ribes, \emph{{Profinite Groups}}, 2th ed., Springer, 2010.

\end{thebibliography}
\end{document}